\def\figurename{Figure} 
\renewcommand{\fnum@figure}[1]{\figurename~\thefigure.}
\def\tablename{Table} 
\renewcommand{\fnum@table}[1]{\tablename~\thetable.}
\newtheorem{theorem}{Theorem}[section]
\newtheorem{lemma}{Lemma}[section]
\newtheorem{proposition}{Proposition}[section]
\theoremstyle{definition}
\newtheorem{definition}{Definition}[section]
\theoremstyle{remark}
\newtheorem{remark}{Remark}[section]
\numberwithin{equation}{section}
\def\P{\mathbb P}
\def\R{\mathbb R}
\def\E{\mathbb E}
\def\E{\mathbb E}
\def\N{\mathbb N}
\def\cal{\mathcal}
\begin{document}
\title{\bfseries\scshape{Obstacle problem for SPDE with nonlinear Neumann boundary condition via reflected generalized backward doubly SDEs}}
\author{\bfseries\scshape Auguste Aman\thanks{Supported by AUF post doctoral grant 07-08, Réf:PC-420/2460}\;\;\thanks{augusteaman5@yahoo.fr,\ Corresponding author.}\\
U.F.R.M.I, Universit\'{e} de Cocody, \\582 Abidjan 22, C\^{o}te d'Ivoire\\
\\\bfseries\scshape N. Mrhardy\thanks{n.mrhardy@ucam.ac.ma}\\
F.S.S.M, Universit\'{e} Cadi Ayyad, \\2390, Marrakech, Maroc}

\date{}
\maketitle \thispagestyle{empty} \setcounter{page}{1}

\begin{abstract}
This paper is intended to give a probabilistic representation for
stochastic viscosity solution of semi-linear reflected stochastic
partial differential equations with nonlinear Neumann boundary
condition. We use its connection with reflected generalized backward
doubly stochastic differential equations.
\end{abstract}

\noindent {\bf AMS Subject Classification:}  60H15; 60H20

\vspace{.08in} \noindent \textbf{Keywords}: Backward doubly SDEs, Stochastic PDEs, Obstacle
problem, stochastic viscosity solutions.

\section{Introduction}
 Backward stochastic
differential equations (BSDEs, for short) were introduced by Pardoux
and Peng \cite{PP1} in 1990, and it was shown in various papers that
stochastic differential equations (SDEs) of this type give a
probabilistic representation for solution (at least in the viscosity
sence) of a large class of system of semi-linear parabolic partial
differential equations (PDEs). Thereafter a new class of BSDEs,
called backward doubly stochastic (BDSDEs), was considered by
Pardoux and Peng \cite{PP3}. The new kind of BSDEs seems suitable
for giving a probabilistic representation for a system of parabolic
stochastic partial differential equations (SPDEs). We refer to
Pardoux and Peng \cite{PP3} for the link between SPDEs
and BDSDEs in the particular case where  solutions of SPDEs are
regular. The more general situation is much more delicate to treat
because of the difficulties of extending the notion of viscosity solutions to SPDEs.

The notion of viscosity solution for PDEs was introduced by Crandall, Ishii
and Lions \cite{CL} for certain first-order Hamilton-Jacobi
equations. Today the theory has become an important tool in many
applied fields, especially in optimal control theory and numerous
subjects related to it.

The stochastic viscosity solution for semi-linear SPDEs was
introduced for the first time in Lions and Souganidis \cite{LS}.
They use the so-called "stochastic characteristic" to remove the
stochastic integrals from a SPDEs. On the other hand, two other ways
of defining a stochastic viscosity solution of SPDEs is considered by
Buckdahn and Ma respectively in \cite{BM1,BM2} and \cite{BM3}. In
the two first paper, they used the "Doss-Sussman"
 transformation to connect the stochastic viscosity solution of SPDEs with the solution of associated BDSDEs.
 In the second one, they introduced the stochastic viscosity solution by using the notion
 of stochastic sub and super jets. Next, in order to give a probabilistic
 representation for viscosity solution of  SPDEs with nonlinear Neumann boundary condition,
 Boufoussi et al. \cite{Bal} introduced the so-called generalized BDSDEs. They refer
 the first technique (Doss-Sussman transformation) of Buckdhan and Ma \cite{BM1,BM2}.

Based on the work of Boufoussi et al. \cite{Bal} and employing the penalized method from Ren et al. \cite{Ral}, the
aim of this paper, is to establish the existence result for semi-linear reflected SPDEs with
nonlinear Neumann boundary condition of the form:
\begin{eqnarray*}
\left\{
\begin{array}{l}
\min\left\{u(t,x)-h(t,x),\frac{\partial}{\partial t}u(t,x)-[{
L}u(t,x)-f(t,x,u(t,x),\sigma^{*}(x)\nabla u(t,x))]\right.\\\\
\left.\,\,\,\,\,\,\,\,\,\,\,\,\,\,\,\,\,\
-g(t,x,u(t,x))\lozenge B_{s}\right\}=0,\,\,\
(t,x)\in[0,T]\times\Theta\\\\ u(0,x)=l(x),\,\,\,\,\,\,\ x\in\overline{\Theta}\\\\
\frac{\displaystyle \partial u}{\displaystyle \partial
n}(t,x)+\phi(t,x,u(t,x))=0,\,\,\,\,\,\,\ x\in\partial\Theta,
\end{array}\right.
\end{eqnarray*}
where $\lozenge$ denotes the Wick product and, thus, indicates that the differential is to understand in It\^{o}'s sense.
Here $B$ is a standard Brownian motion, $L$ is an infinitesimal
generator of a diffusion process $X$, $\Theta$ is a connected bounded
domain and $f,\, g,\, \phi,\, l, h$ are some measurable
functions. More precisely, we give some direct links between the
stochastic viscosity solution of the previous reflected SPDE and the
solution of the following reflected generalized BDSDE:
\begin{eqnarray*}
Y_{t}&=&\xi+\int_{0 }^{t}f(s,Y_{s},Z_{s})ds+\int_{0
}^{t}\phi(s,Y_{s})dA_{s}+\int_{0}^{t}g(s,Y_{s})\,dB_{s}\\
&&-\int_{0}^{ t}Z_{s}\downarrow dW_{s}+K_{t},\,\ 0\leq t\leq
T.\label{a1}
\end{eqnarray*}
$\xi$ is the terminal value, $A$ is a positive real-valued
increasing process and $\downarrow dW_{s}$ denote the classical
backward It\^{o} integral with respect the Brownian motion $W$. Note
that our work can be considered as a generalization of two results.
First the one given in \cite{Ral}, where the authors treat
deterministic reflected PDEs with nonlinear Neumann boundary
conditions i.e $g\equiv0$. The second result appears in \cite{Bal} where
the non reflected SPDE with nonlinear Neumann
boundary condition is considered.

The present paper is organized as follows. An existence and
uniqueness result for solution to large class of reflected generalized BDSDEs is
shown in Section 2. Section 3 is devoted to give a definition of a
reflected stochastic solution to SPDEs and by the same occasion
establishes its existence result.

\section{Reflected generalized backward doubly stochastic differential equations}
\subsection{Notation, assumptions and definition.}

The scalar product of the space $\R^{d} (d\geq 2)$ will be denoted
by $<.,.>$ and the associated Euclidian norm  by $\|.\|$.

In what follows let us fix a positive real number
$T>0$. First of all $\{W_{t}, 0\leq t\leq T\}$ and $\{B_{t},\ 0\leq
t\leq T\}$ are two mutually independent standard Brownian motions
with values respectively in $\mbox{I\hspace{-.15em}R}^{d}$ and
$\mbox{I\hspace{-.15em}R}^{\ell}$, defined respectively on the two
probability spaces $(\Omega_{1},\mathcal{F}_{1},{\P}_{1})$ and
$(\Omega_{2},\mathcal{F}_{2},{\P}_{2})$. Let ${\bf
F}^{B}=\{\mathcal{F}^{B}_t\}_{t\geq 0}$ denote the natural
filtration generated by $B$, augmented by the $\P_{1}$-null sets of
$\mathcal{F}_{1}$; and let
$\mathcal{F}^{B}=\mathcal{F}^{B}_{\infty}$. On the other hand we
consider the following family of $\sigma$-fields:
\begin{eqnarray*}
\mathcal{F}^{W}_{t,T}=\sigma\{W_{s}-W_{T},t\leq s\leq T\}\vee
\mathcal{N}_{2},
\end{eqnarray*}
where $\mathcal{N}_{2}$ denotes all the $\P_{2}$- null sets in
$\mathcal{F}_{2}$. We also denote
${\bf F}^{W}_{T}=\{\mathcal{F}^{W}_{t,T}\}_{0\leq t\leq T}$.

Next
we consider the product space $(\Omega,\mathcal{F},\P)$ where
\begin{eqnarray*}
\Omega=\Omega_{1}\times\Omega_{2},\,\,\mathcal{F}=\mathcal{F}_{1}\otimes\mathcal{F}_{2}\,\,
\mbox{ and}\,\, \P=\P_{1}\otimes\P_{2}.
\end{eqnarray*}
For each $t\in[0,T]$, we define
\begin{eqnarray*}
\mathcal{F}_{t}=\mathcal{F}_{t}^{B}\otimes\mathcal{F}^{W}_{t,T} .
\end{eqnarray*}

Note that the collection ${\bf F}= \{\mathcal{F}_{t},\ t\in
[0,T]\}$ is neither increasing nor decreasing and it does not
constitute a filtration.

Further, we assume that random variables
$\xi(\omega_{1}),\;\omega_{1}\in \Omega_{1}$ and
$\zeta(\omega_{2}),\; \omega_{2}\in \Omega_{2}$ are considered as
random variables on $\Omega $ via the following identification:
\begin{eqnarray*}
\xi(\omega_{1},\omega_{2})=\xi(\omega_{1});\,\,\,\,\,\zeta(\omega_{1},\omega_{2})=\zeta(\omega_{2}).
\end{eqnarray*}

In the sequel, let\
$\{A_t,\ 0\leq t\leq T\}$\ be a continuous, increasing and
${\bf F}$-adapted real valued  process such that\ $A_0=0.$

For
any $d\geq 1$, we consider the following spaces of processes:
\begin{enumerate}
\item $M^{2}(0,T,\R^{d})$ denote the Banach space of all
equivalence classes (with respect to the measure $d\mathbb{P}\times
dt$) where each equivalence class contains an d-dimensional jointly
measurable stochastic process $\displaystyle{\varphi_{t};
t\in[0,T]}$, which satisfies :
\begin{description}
\item $(i)$ $\displaystyle{\|\varphi\|^{2}_{M^{2}}=\E\int ^{T}_{0}|\varphi_{t}|^{2}dt<\infty}$;

\item $(ii)$ $\varphi_t$ is ${\mathcal{F}}_{t}$-measurable , for any $t\in [0,T]$.
\end{description}
\item  $S^{2}([0,T],\R)$ is the set of one
dimensional continuous stochastic processes which verify:
\begin{description}
\item $(iii)$ $\displaystyle{\|\varphi\|^{2}_{S^{2}}=\E\left(\sup_{0\leq t\leq T}|\varphi_{t}|
^{2}+\int^T_0|\varphi_s|^{2}dA_s\right)<\infty}$;

\item $(iv)$ $\varphi_t$ is ${\mathcal{F}}_{t}$-measurable , for any $t\in [0,T]$.
\end{description}
\end{enumerate}
Let us give the data $(\xi,f,g,\phi, S)$ which satisfy:
\begin{itemize}
\item[$(\textbf{H}_1)$] $\xi$ is a square integrable random variable which is $\mathcal{F}_{T}$-measurable such  that for all $\mu>0$
$$ \mathbb{E}\left(e^{\mu A_T}|\xi|^2\right) < \infty. $$
\item[$(\textbf{H}_2)$]
$f:\Omega\times [0,T]\times\R\times\R^{d}\rightarrow \R$,\
$g:\Omega\times [0,T]\times\R\times\R^{d}\rightarrow \R^{\ell},$\
and $\phi:\Omega\times [0,T]\times\R \rightarrow \R,$ are three
functions such that:
\begin{itemize}
  \item[$(a)$] There exist $\mathcal{F}_t$-adapted processes $\{f_t,\,
\phi_t,\,g_t:\,0\leq t\leq T\}$ with values in $[1,+\infty)$ and
with the property that for any $(t,y,z)\in
[0,T]\times\R\times\R^{d}$, and any $\mu>0$, the following hypotheses
are satisfied for some strictly positive finite constant $K$:
\begin{eqnarray*}
\left\{
\begin{array}{l}
f(t,y,z),\, \phi(t,y),\,\mbox{and}\, g(t,y,z)\, \mbox{are}\, \mathcal{F}_t\mbox{-measurable processes},\\\\
|f(t,y,z)|\leq f_t+K(|y|+\|z\|),\\\\
|\phi(t,y)|\leq \phi_t+K|y|,\\\\
|g(t,y,z)|\leq g_t+K(|y|+\|z\|),\\\\
\displaystyle \E\left(\int^{T}_{0} e^{\mu
A_t}f_t^{2}dt+\int^{T}_{0}e^{\mu A_t}g_t^{2}dt+\int^{T}_{0}e^{\mu
A_t}\phi_t^{2}dA_t\right)<\infty.
\end{array}\right.
\end{eqnarray*}
\item[$(b)$] There exist constants $c>0,\, \beta<0$ and $0<\alpha<1$ such that for any $(y_1,z_1),\,(y_2,z_2)\in\R\times\R^{d}$,
\begin{eqnarray*}
\left\{
\begin{array}{l}
(i)\, |f(t,y_1,z_1)-f(t,y_2,z_2)|^{2}\leq c(|y_1-y_2|^{2}+\|z_1-z_2\|^{2}),\\\\
(ii)\, |g(t,y_1,z_1)-g(t,y_2,z_2)|^{2}\leq c|y_1-y_2|^{2}+\alpha\|z_1-z_2\|^{2},\\\\
(iii)\; \langle y_1-y_2,\phi(t,y_1)-\phi(t,y_2)\rangle\leq
\beta|y_1-y_2|^{2}.
\end{array}\right.
\end{eqnarray*}
\end{itemize}
\item[($\textbf{H}_3$)] The obstacle $\left\{  S_{t},0\leq t\leq T\right\}$,
is a continuous $\mathcal{F}_{t}$-progressively measurable
real-valued process satisfying for any $\mu>0$ $$\E\left(  \sup_{0\leq t\leq T}e^{\mu A_t}\left|
S^+_{t}\right| ^{2}\right)  <\infty.$$ We shall always assume that
$S_{T}\leq\xi\ a.s.$
\end{itemize}
One of our main goal in this paper is the study of reflected generalized BDSDEs,
\begin{eqnarray}
Y_{t}&=&\xi+\int_{0 }^{t}f(s,Y_{s},Z_{s})ds+\int_{0
}^{t}\phi(s,Y_{s})dA_s+\int_{0}^{t}g(s,Y_{s},Z_{s})\,dB_{s}\nonumber\\
&&-\int_{0}^{ t}Z_{s}\downarrow dW_{s}+K_{t},\,\ 0\leq t\leq
T.\label{a1}
\end{eqnarray}
First of all let us give a definition to the solution of this BDSDEs.
\begin{definition}
By a solution of the reflected generalized BDSDE $(\xi,f,\phi,g,S)$
we mean a triplet of processes  $(Y,Z,K)$, which satisfies (\ref{a1})
such that the following holds $\P$- a.s
\begin{description}
\item $(i)$ $(Y,Z)\in S^{2}([0,T];\R)\times M^{2}(0,T;\R^{d})$
\item  $(ii)$ the map $s\mapsto Y_s$ is continuous
\item  $(iii)$ $Y_{t}\geq S_{t},\,\,\,\ 0\leq t\leq T$,\,\
\item  $(iv)$ $K$ is an increasing process such that $K_{0}=0$\ %
and\ $\displaystyle \int_{0}^{ T}\left( Y_{t}-S_{t}\right)
dK_{t}=0$.
\end{description}
\end{definition}
\begin{remark}
We note that although the equation ($\ref{a1}$) looks like a forward
SDE, it is indeed a backward one because a terminal condition is
given at $t=0$ ($Y_{0}=\xi$). We use this  technique of reversal time due
to the set-up of our problem that is, its connection to the the form of our obstacle
problem for SPDE with nonlinear Neumann boundary condition.
\end{remark}
In the sequel, $C$ denotes a positive constant which may vary from
one line the other.
\subsection{Comparison theorem}
Let us give this comparison theorem related of the generalized
BDSDE, which we will need in the proof of our main result. The proof
follows with the same computation as in \cite{Yal}, with slight
modification due to the presence of the integral with respect the
increasing process $A$. So we just repeat the main step.
\begin{theorem}(Comparison theorem for generalized BDSDE)\label{Thm:comp}
Let $(Y,Z)$ and $(Y',Z')$ be the unique solution
of the non reflected generalized BDSDE associated to
$(\xi,f,\phi,g)$ and $(\xi',f',\phi,g)$ respectively. If $\xi\leq
\xi',\; f(t,Y'_t,Z'_t)\leq f'(t,Y'_t,Z'_t)$ and $\phi(t,Y'_t)\leq
\phi^{'}(t,Y'_t)$, then $Y_t\leq Y'_t,\; \forall\, t\in [0, T]$.
\end{theorem}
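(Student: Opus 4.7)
The plan is the standard BDSDE comparison argument adapted to the presence of the $dA$-integral: apply It\^o's formula to $(\bar Y_t^+)^2$ where $\bar Y = Y - Y'$, and exploit the Lipschitz/monotonicity hypotheses to obtain a Gr\"onwall estimate forcing $\bar Y^+\equiv 0$. Write $\bar Y_t = Y_t - Y'_t$, $\bar Z_t = Z_t - Z'_t$, $\bar \xi = \xi - \xi'$; by assumption $\bar\xi\le 0$, so $(\bar\xi)^+ = 0$. The first step is to apply the Pardoux--Peng It\^o formula extended to include the $dA$-integral (this is precisely where the ``slight modification'' of the computation of \cite{Yal} enters) to the process $(\bar Y^+)^2$. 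Taking expectations kills both the forward $dB$ and the backward $\downarrow dW$ martingale parts, yielding an identity of the form
\begin{align*}
\E[(\bar Y_t^+)^2] + \E\!\int_0^t \mathbf{1}_{\{\bar Y_s>0\}}\|\bar Z_s\|^2\,ds
&= 2\E\!\int_0^t \bar Y_s^+\bigl[f(s,Y_s,Z_s)-f'(s,Y'_s,Z'_s)\bigr]ds\\
&\quad+ 2\E\!\int_0^t \bar Y_s^+\bigl[\phi(s,Y_s)-\phi'(s,Y'_s)\bigr]dA_s\\
&\quad+ \E\!\int_0^t \mathbf{1}_{\{\bar Y_s>0\}}|g(s,Y_s,Z_s)-g(s,Y'_s,Z'_s)|^2 ds.
\end{align*}

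Next I would split each driver difference by inserting and subtracting the value at $(Y',Z')$ of the non-primed driver. The hypotheses $f(\cdot,Y',Z')\le f'(\cdot,Y',Z')$ and $\phi(\cdot,Y')\le\phi'(\cdot,Y')$ make the resulting ``prime versus non-prime'' pieces non-positive once multiplied by $\bar Y^+\ge 0$. The remaining Lipschitz piece of $f$ is controlled via $(\textbf{H}_2)(b)(i)$ and Young's inequality, producing a bound of the form $C(\bar Y_s^+)^2+\varepsilon\mathbf{1}_{\{\bar Y_s>0\}}\|\bar Z_s\|^2$; the $\phi$ piece is handled by the monotonicity $(\textbf{H}_2)(b)(iii)$, giving $\bar Y_s^+[\phi(s,Y_s)-\phi(s,Y'_s)]\le\beta(\bar Y_s^+)^2$ with $\beta<0$; and $(\textbf{H}_2)(b)(ii)$ yields $c(\bar Y_s^+)^2+\alpha\mathbf{1}_{\{\bar Y_s>0\}}\|\bar Z_s\|^2$ for the $g$ term. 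Choosing $\varepsilon$ so small that $\varepsilon+\alpha<1$ makes the $\|\bar Z\|^2$ coefficients combine into a non-positive quantity, the $dA$-integral is bounded above by $2\beta\E\!\int_0^t (\bar Y_s^+)^2\,dA_s\le 0$, and what survives is
$$\E[(\bar Y_t^+)^2] \le C\,\E\!\int_0^t (\bar Y_s^+)^2\,ds.$$

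Gr\"onwall's lemma then gives $\E[(\bar Y_t^+)^2]=0$ for every $t\in[0,T]$, hence $Y_t\le Y'_t$ almost surely; pathwise continuity of $Y$ and $Y'$ upgrades this to a statement simultaneously valid for all $t\in[0,T]$. The only genuinely delicate point is the It\^o formula for the non-smooth function $x\mapsto(x^+)^2$ combined with the BDSDE's mixed forward/backward stochastic structure \emph{and} the extra $dA$-term; once that extended formula is granted, no exponential weight $e^{\mu A_t}$ is needed in the estimate because the strict negativity of $\beta$ already absorbs the $dA$-contribution, and the rest is bookkeeping.
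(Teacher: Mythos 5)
Your proposal is correct and follows essentially the same route as the paper's proof: It\^o's formula applied to $((Y-Y')^+)^2$, insertion of the non-primed drivers at $(Y',Z')$ to exploit the ordering hypotheses, the Lipschitz/monotonicity bounds from $(\textbf{H}_2)(b)$ with a choice of $\varepsilon$ making the $\|\bar Z\|^2$ coefficient (together with $\alpha$) strictly less than one, and dropping the $dA$-term via $\beta<0$. If anything, you are slightly more careful than the paper, since you state explicitly the Gr\"onwall step that the paper leaves implicit when it passes from the assembled inequality to $\E((\Delta Y_t)^+)^2\le 0$.
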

\begin{proof}
Let us set $\Delta Y=Y-Y'$, $\Delta Z=Z-Z'$ and $(\Delta Y)^{+}=(Y-Y')^{+}$ (with $f^{+}=\sup\{f,0\}$).\newline Using It\^o's formula, we get for all $0\leq t\leq T$
\begin{align}\label{eq:1}
&\mathbb{E}((\Delta Y_t)^+)^2+\mathbb{E}\int_0^t \|\Delta
Z_s\|^2{\bf 1}_{\{Y_s>Y'_s\}}ds
\nonumber\\
\leq &\E((\xi-\xi')^{+})^2+ 2\mathbb{E}\int_0^t
(\Delta Y_s)^+{\bf 1}_{\{Y_s>Y'_s\}}\left\{f(s,Y_s,Z_s)-f'(s,Y'_s,Z'_s)\right\}ds\nonumber\\
&+2\mathbb{E}\int_0^t
(\Delta Y_s)^+{\bf 1}_{\{Y_s>Y'_s\}}\left\{\phi(s,Y_s)-\phi'(s,Y'_s)\right\}dA_s\nonumber\\
 &+\mathbb{E}\int_0^t\left\|g(s,Y_s,Z_s)-g(s,Y'_s,Z'_s)\right\|^2{\bf 1}_{\{Y_s>Y'_s\}}ds,
\end{align}
where ${\bf 1}_{\Gamma}$ denote the characteristic function of a
given set $\Gamma\in {\bf F}$ defined by

${\bf 1}_{\Gamma}(\omega)=\left\{
\begin{array}{l}
1\ \mbox{if}\ \omega\in \Gamma\\
0\ \mbox{if}\ \omega\in \Gamma.
\end{array}\right.
$\newline
From $(\textbf{H}_2)(b)$ we have
\begin{eqnarray*}
2(\Delta Y_s)^+\left\{f(s,Y_s,Z_s)-f'(s,Y'_s,Z'_s)\right\}
&\leq & 2(\Delta Y_s)^+\left\{f(s,Y_s,Z_s)-f(s,Y'_s,Z'_s)\right\}\\
&\leq &(\frac{1}{\varepsilon}+\varepsilon c)((\Delta
Y_s)^+)^2+\varepsilon c\|\Delta Z_s\|^2,
\end{eqnarray*}
\begin{eqnarray*}
2(\Delta Y_s)^+\left\{\phi(s,Y_s)-\phi'(s,Y'_s)\right\} &\leq &
2(\Delta Y_s)^+\left\{\phi(s,Y_s)-\phi(s,Y'_s)\right\}\\
&\leq & \beta((\Delta Y_s)^+)^2
\end{eqnarray*}

and
\begin{eqnarray*}
\left\|g(s,Y_s,Z_s)-g(s,Y'_s,Z'_s)\right\|^2{\bf 1}_{\{Y_s>Y'_s\}}
&\leq&c((\Delta Y_s)^+)^2{\bf 1}_{\{Y_s>Y'_s\}}+\alpha\|\Delta
Z_s\|^2{\bf 1}_{\{Y_s>Y'_s\}}.
\end{eqnarray*}
Plugging these inequalities on (\ref{eq:1}) and choosing
$\displaystyle \varepsilon=\frac{1-\alpha}{2c}$, we conclude that
$$ \mathbb{E}((\Delta Y_t)^+)^2\leq 0$$
which leads to $\Delta Y^+_t=0$ a.s. and so $Y'_t\geq Y_t$ a.s. for
all $t\leq T$.
\end{proof}
\subsection {Existence and Uniqueness result}
Our main goal in this section is to prove the following theorem.
\begin{theorem}\label{Th:exis-uniq}
Under the hypotheses $(\textbf{H}_1),\ (\textbf{H}_2)$ and
$(\textbf{H}_3)$, there exists a unique solution for the reflected
generalized BDSDE $(\xi,f,\phi,g,S)$.
\end{theorem}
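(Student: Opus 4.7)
My plan is to use the penalization method, which is the natural approach given the hint in the introduction (``employing the penalized method from Ren et al.\ \cite{Ral}'') and the fact that a comparison theorem (Theorem \ref{Thm:comp}) has just been established. For each integer $n\geq 1$, I would introduce the non-reflected generalized BDSDE
\begin{eqnarray*}
Y^n_t &=& \xi + \int_0^t f(s,Y^n_s,Z^n_s)\,ds + \int_0^t \phi(s,Y^n_s)\,dA_s + \int_0^t g(s,Y^n_s,Z^n_s)\,dB_s \\
&& - \int_0^t Z^n_s \downarrow dW_s + n\int_0^t (Y^n_s - S_s)^{-}\,ds,
\end{eqnarray*}
whose existence and uniqueness follow from the non-reflected theory of generalized BDSDEs of Boufoussi et al.\ \cite{Bal}, after verifying that the penalized driver $f(s,y,z) + n(y-S_s)^{-}$ still satisfies $(\textbf{H}_2)$; the monotonicity constant in the Lipschitz bound shifts but remains admissible since the penalization is $n$-Lipschitz in $y$.

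The next step is to set $K^n_t := n\int_0^t (Y^n_s - S_s)^{-}\,ds$ and establish three things. First, by the comparison theorem just proved (applied to the drivers $f(s,y,z)+n(y-S_s)^{-}$ and $f(s,y,z)+(n+1)(y-S_s)^{-}$), the sequence $Y^n$ is non-decreasing in $n$. Second, I would derive a priori estimates uniform in $n$ by applying It\^o's formula to $e^{\mu A_t}|Y^n_t - S_t|^2$ (or to $e^{\mu A_t}|Y^n_t|^2$), picking $\mu$ large enough so that the Lipschitz/monotonicity constants $c, \alpha, \beta, K$ appearing in $(\textbf{H}_2)$ can be absorbed using the standard inequality $2ab\leq \frac{1}{\varepsilon}a^2+\varepsilon b^2$ and the structural constraint $\alpha<1$ on the $Z$-Lipschitz constant of $g$. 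This yields uniform control of $\mathbb{E}(\sup_t e^{\mu A_t}|Y^n_t|^2 + \int_0^T e^{\mu A_t}|Y^n_t|^2 dA_t)$, $\mathbb{E}\int_0^T e^{\mu A_t}\|Z^n_s\|^2 ds$, and $\mathbb{E}|K^n_T|^2$; the square-integrability of $\sup_t e^{\mu A_t}(S_t^+)^2$ under $(\textbf{H}_3)$ is what makes the penalization term controllable.

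With these estimates in hand the passage to the limit is routine for $Y^n$ (monotone convergence gives $Y$) and for $K^n$ (weak limit $K$), and gives $Y_t \geq S_t$ since $\mathbb{E}\int_0^T ((Y^n_s - S_s)^{-})^2 ds \to 0$. The main obstacle I expect is the strong convergence of $Z^n$ in $M^2(0,T;\R^d)$ and the identification of the Skorokhod flat-off condition $\int_0^T (Y_t-S_t)\,dK_t=0$. For the former I would write the equation satisfied by $Y^n - Y^m$ and apply It\^o's formula to $e^{\mu A_t}(Y^n_t - Y^m_t)^2$, using the cross term $\int_0^t (Y^n_s - Y^m_s)(dK^n_s - dK^m_s)$; this term is handled, as is classical, by an argument due to Saisho along the lines of \cite{Ral}, exploiting that $\int_0^T (Y^n_s - S_s)\,dK^m_s \leq 0$ in the limit. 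For the flat-off condition one passes to the limit in $\int_0^T (Y^n_t - S_t)\,dK^n_t = -\int_0^T(Y^n_t - S_t)^{-}\,dK^n_t \leq 0$ and combines with $Y_t \geq S_t$ and $K$ increasing.

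Finally, uniqueness is obtained by taking two solutions $(Y,Z,K)$ and $(Y',Z',K')$, applying It\^o's formula to $e^{\mu A_t}|Y_t - Y'_t|^2$, and observing that the cross term with the reflecting measures satisfies
\begin{eqnarray*}
\int_0^T (Y_s - Y'_s)(dK_s - dK'_s) \leq 0
\end{eqnarray*}
by the flat-off condition and the fact that both $Y,Y' \geq S$. The Lipschitz/monotonicity assumptions in $(\textbf{H}_2)(b)$, together with a suitable choice of $\mu$, then force $Y=Y'$, $Z=Z'$ and consequently $K=K'$.
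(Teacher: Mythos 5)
Your overall strategy (penalize, compare, derive uniform estimates, pass to the limit, then prove uniqueness via It\^o and the flat-off condition) is the same as the paper's, and the a priori estimate and uniqueness parts of your plan are sound. The genuine gap is in the step you call ``routine'': the strong convergence of $Z^n$ and the identification of the limit require controlling the cross term $\int_0^t (Y^n_s-Y^m_s)(dK^n_s-dK^m_s)$, and after the standard decomposition this is bounded by $\sup_s (Y^n_s-S_s)^- \, K^m_T+\sup_s (Y^m_s-S_s)^- \, K^n_T$, so what you actually need is the \emph{uniform-in-$t$} convergence $\E\bigl(\sup_{0\leq t\leq T}|(Y^n_t-S_t)^-|^2\bigr)\to 0$ (the paper's Lemma \ref{lem:2}), not merely $\E\int_0^T((Y^n_s-S_s)^-)^2\,ds\to 0$. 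The $ds\times d\P$-a.e.\ inequality $Y\geq S$ that you extract from the bound on $K^n_T$ is not enough: to upgrade it to ``for all $t$'' you need either continuity of $Y$ (which at that stage is not yet known, and proving it is exactly what the strong convergence is for --- the argument would be circular) or the Section Theorem plus Dini argument the paper uses. You cite ``an argument along the lines of \cite{Ral}'', but in \cite{Ral} (and in \cite{Kal}) that argument rests on representing the penalized solution at a stopping time $\nu$ as a conditional expectation; in the doubly stochastic setting the backward integral $\int g\,dB$ destroys the martingale structure with respect to the relevant filtration, so the representation fails when $g$ is coupled to $(Y^n,Z^n)$ as in your one-step penalization.

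This is precisely why the paper proceeds in two steps, which your proposal skips: first it treats $g=g(s)$ independent of $(Y,Z)$, where one can shift by $\int_t^T g(s)\,dB_s$ (defining $\overline Y^n,\overline S,\overline\xi$), compare with an auxiliary BSDE $\widetilde Y^n$ free of the $dB$-integral, write $\widetilde Y^n_\nu$ as a conditional expectation with respect to $\mathcal{G}_\nu=\mathcal{F}^W_{\nu,T}\otimes\mathcal{F}^B_{0,T}$, and conclude $Y\geq S$ for all $t$ via the Section Theorem and then uniform convergence via Dini; second, the general $g(s,y,z)$ is recovered by a Banach fixed-point argument for the map $(\bar Y,\bar Z)\mapsto (Y,Z)$ that freezes $(\bar Y,\bar Z)$ inside $g$, the contraction constant coming from $\alpha<1$ in $(\textbf{H}_2)(b)(ii)$. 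Without either this reduction or a genuinely different proof of the uniform obstacle estimate for coupled $g$, your passage to the limit does not go through as written; everything else in your outline (monotonicity via Theorem \ref{Thm:comp}, the uniform estimates of Lemma \ref{lem:1}, the flat-off condition by weak convergence of $dK^n$, and the uniqueness argument) matches the paper.
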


Our proof is based on a penalization method but is slightly
different from El Karoui et al \cite{Kal}, because of the presence
of the two integral with respect the increasing process $A$ and the
Brownian motion $B$, and also because of the time reversal.

For each $n\in\N^{*}$, we set
\begin{eqnarray}\label{a0}
f_{n}(s,y,z)=f(s,y,z)+n(y-S_{s})^{-}
\end{eqnarray}
and consider the generalized BDSDE
\begin{eqnarray}
Y_{t}^{n}&=&\xi+\int_{0}^{t}f_{n}(s,Y_{s}^{n},Z_{s}^{n})ds+\int_{0}^{t}\phi(s,Y_{s}^{n})dA_s\nonumber\\
&&+\int_{0}^{t}g(s,Y_{s}^{n},Z^{n}_{s})\,dB_{s}-\int_{0}^{t}Z_{s}^{n}\downarrow
dW_{s}, \label{h2}
\end{eqnarray}
obtained by the penalized method. We point out that the previous version of generalized
BDSDE is, in fact, the time reversal version of that considered in Boufoussi et
al \cite{Bal}, due to the set-up of our problem. We nonetheless use
the same name because they are similar in nature. Consequently, it is
well known (see Boufoussi et al., \cite{Bal}) that, there exist a
unique $(Y^{n},Z^{n})\in S^{2}([0,T];\R)\times M^{2}(0,T;\R^{d})$
solution of the generalized BDSDE $(\ref{h2})$ such that for each
$n\in\N^{*}$,
\begin{eqnarray*}
\E\left(\sup_{0\leq t\leq
T}|Y^{n}_{t}|^{2}+\int_0^T\left\|Z_s^{n}\right\|^2 ds
\right)<\infty.
\end{eqnarray*}
In order to prove Theorem \ref{Th:exis-uniq}, we state the following
lemmas that will be useful.
\begin{lemma}\label{lem:1}
Let us consider  $(Y^{n},Z^{n})\in S^{2}([0,T];\R)\times
M^{2}(0,T;\R^{d})$ solution of BDSDE $(\ref{h2})$. Then for any $\mu>0$, there exists
$C>0$ such that,
\begin{eqnarray*}
\sup_{n\in\N^{*}}\E\left( \sup_{0\leq t\leq T}e^{\mu A_t}\left|
Y_{t}^{n}\right| ^{2}+\int_{0}^{T}e^{\mu A_s}\left| Y_{s}^{n}\right| ^{2}
dA_s+\int_{0}^{T}e^{\mu A_s}\left\| Z_{s}^{n}\right\| ^{2}
ds+|K^{n}_{T}|^{2}\right)<C
\end{eqnarray*}
where
\begin{eqnarray} K_{t
}^{n}=n\int_{0}^{t}(Y_{s}^{n}-S_{s})^{-}ds , \,\ 0\leq t\leq
T.\label{K}
\end{eqnarray}
\end{lemma}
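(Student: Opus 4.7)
The plan is to apply It\^o's formula to $e^{\mu A_t}|Y^n_t|^2$, take expectation to kill the two martingale integrals (with respect to the forward $B$ and the backward $W$, exactly as in the derivation of (\ref{eq:1}) in Theorem~\ref{Thm:comp}), then close the estimate using hypothesis $(\textbf{H}_2)$ together with the special structure of the penalized drift $f_n=f+n(y-S_s)^-$.

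\medskip
\noindent\emph{Step 1 (It\^o expansion).} Applying the BDSDE It\^o formula as in \cite{PP3} yields, after taking expectations,
\begin{align*}
\E e^{\mu A_t}|Y^n_t|^2 &+\E\int_0^t e^{\mu A_s}\|Z^n_s\|^2\,ds
= \E|\xi|^2 + \mu\E\int_0^t e^{\mu A_s}|Y^n_s|^2\,dA_s\\
&+2\E\int_0^t e^{\mu A_s}Y^n_s f(s,Y^n_s,Z^n_s)\,ds
 +2\E\int_0^t e^{\mu A_s}Y^n_s\phi(s,Y^n_s)\,dA_s\\
&+2n\E\int_0^t e^{\mu A_s}Y^n_s(Y^n_s-S_s)^{-}\,ds
 +\E\int_0^t e^{\mu A_s}\|g(s,Y^n_s,Z^n_s)\|^2\,ds.
\end{align*}

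\noindent\emph{Step 2 (estimate each term).} Using the growth/Lipschitz assumptions in $(\textbf{H}_2)(a)$--$(b)$ together with Young's inequality I would obtain: for the $f$-term, $2Y^n_s f\le C_\epsilon|Y^n_s|^2+f_s^2+\epsilon\|Z^n_s\|^2$; for the $g$-term, $\|g\|^2\le(1+\eta)c|Y^n_s|^2+(1+\eta)\alpha\|Z^n_s\|^2+C_\eta g_s^2$, where the condition $\alpha<1$ is crucial so that picking $\eta,\epsilon$ small absorbs a fixed positive fraction of $\int e^{\mu A_s}\|Z^n\|^2\,ds$ onto the LHS. For the $\phi$-term, the monotonicity $(\textbf{H}_2)(b)(iii)$ with $\beta<0$ gives $2Y^n_s\phi(s,Y^n_s)\le(2\beta+\delta)|Y^n_s|^2+\phi_s^2/\delta$, which combines with the $\mu|Y^n|^2 dA_s$ coming from the exponential weight.

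\medskip
\noindent\emph{Step 3 (penalization).} The key identity is
\[
Y^n_s(Y^n_s-S_s)^{-}=-((Y^n_s-S_s)^{-})^{2}+S_s(Y^n_s-S_s)^{-}\le S^+_s(Y^n_s-S_s)^{-},
\]
so the penalization contribution is dominated by $2\int_0^t e^{\mu A_s}S^+_s\,dK^n_s$. I would bound this by
\[
2\int_0^t e^{\mu A_s}S^+_s\,dK^n_s \le 2\sup_{0\le s\le T}(e^{\mu A_s/2}S^+_s)\cdot e^{\mu A_T/2}K^n_T \le \tfrac{1}{\kappa}\E\sup_s e^{2\mu A_s}(S^+_s)^2+\kappa\,\E(K^n_T)^2,
\]
which introduces $\E(K^n_T)^2$ and must be closed later.

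\medskip
\noindent\emph{Step 4 (a priori estimate for $K^n_T$).} Reading (\ref{h2}) at $t=T$,
\[
K^n_T=Y^n_T-\xi-\int_0^T f(s,Y^n,Z^n)\,ds-\int_0^T \phi(s,Y^n)\,dA_s-\int_0^T g\,dB_s+\int_0^T Z^n_s\downarrow dW_s,
\]
so squaring, taking expectation, and using It\^o's isometry for both stochastic integrals together with the growth bounds gives
\[
\E(K^n_T)^2\le C\Bigl[\E|Y^n_T|^2+\E|\xi|^2+\E\!\!\int_0^T\!(f_s^2+g_s^2)ds+\E\!\!\int_0^T\!\!\phi_s^2 dA_s+\E\!\!\int_0^T\!|Y^n|^2(ds+dA_s)+\E\!\!\int_0^T\!\|Z^n\|^2 ds\Bigr].
\]
Here the bound on $\E(\int\phi\,dA)^2$ is obtained by Cauchy--Schwarz in the form $\bigl(\int\phi e^{\mu A_s/2}\cdot e^{-\mu A_s/2}dA_s\bigr)^2\le\mu^{-1}\int\phi^2 e^{\mu A_s}dA_s$, exploiting the finite integral $\int_0^T e^{-\mu A_s}dA_s\le\mu^{-1}$.

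\medskip
\noindent\emph{Step 5 (Gronwall and sup-norm).} Combining Steps 1--4 and choosing $\epsilon,\eta,\delta,\kappa$ small enough, I absorb the $\|Z^n\|^2$ and $(K^n_T)^2$ contributions into the LHS and apply Gronwall's inequality with respect to the measure $ds+dA_s$ to obtain uniform-in-$n$ bounds on $\E e^{\mu A_t}|Y^n_t|^2$, $\E\int e^{\mu A_s}|Y^n|^2\,dA_s$ and $\E\int e^{\mu A_s}\|Z^n\|^2\,ds$, with a constant depending on $\mu$ via $\E e^{\mu' A_T}|\xi|^2<\infty$ for a suitable $\mu'\ge\mu$. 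Re-inserting into Step~4 gives the bound on $\E(K^n_T)^2$. Finally, the bound on $\E\sup_t e^{\mu A_t}|Y^n_t|^2$ is obtained by going back to the un-expectated It\^o identity, taking $\sup_t$ first, and applying the Burkholder--Davis--Gundy inequality to the two martingale terms $\int e^{\mu A_s}Y^n_s g\,dB_s$ and $\int e^{\mu A_s}Y^n_s Z^n_s\downarrow dW_s$, each of which produces a term of the form $\tfrac{1}{2}\E\sup_t e^{\mu A_t}|Y^n_t|^2+C\E\int e^{\mu A_s}(\|g\|^2+\|Z^n\|^2)ds$ that is absorbed.

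\medskip
\noindent\emph{Main obstacle.} The principal difficulty is the circular dependence between the penalization bound (which introduces $\E(K^n_T)^2$) and the a priori control of $K^n_T$ (which involves $\E\int\|Z^n\|^2 ds$, the very quantity we want to bound). This is broken by carefully choosing all Young-inequality parameters small. A secondary nuisance is keeping track of the sign conventions in the It\^o formula for the backward integral $\downarrow dW$ and the term $\int\phi\,dA_s$ against the increasing process $A$, already encountered in Theorem~\ref{Thm:comp}.
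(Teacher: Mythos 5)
Your proposal follows the same route as the paper's proof: It\^o's formula applied to $e^{\mu A_t}|Y^n_t|^2$, Young-type absorption of the $f$- and $g$-terms using $\alpha<1$, the observation that $\int_0^t e^{\mu A_s}(Y^n_s-S_s)\,dK^n_s\le 0$ so that only $\int_0^t e^{\mu A_s}S_s\,dK^n_s$ survives from the penalization, control of $\E(K^n_t)^2$ through the representation (\ref{K:n}) of $K^n$ (including the same Cauchy--Schwarz trick $\int_0^T e^{-\mu A_s}\,dA_s\le \mu^{-1}$ for the $\phi\,dA_s$ term), absorption of the $\varepsilon\E(K^n_T)^2$ contribution by taking the Young parameter small, then Gronwall and Burkholder--Davis--Gundy for the supremum. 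In that sense the skeleton matches the paper exactly.

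Two steps, however, would fail as written. First, the $dA_s$ bookkeeping: with the weight $e^{\mu A_t}$ and It\^o from $0$ to $t$, the term $\mu\int_0^t e^{\mu A_s}|Y^n_s|^2\,dA_s$ enters with the unfavourable positive sign (your Step 1), and the only negative $dA_s$-term available is $2\beta|Y^n_s|^2\,dA_s$ coming from $({\bf H}_2)(b)(iii)$; your "combines with the $\mu|Y^n|^2\,dA_s$" therefore yields a nonpositive net coefficient only when $\mu<2|\beta|$, whereas the lemma is asserted for every $\mu>0$, and a strictly negative net $dA_s$-coefficient is exactly what is needed to put the claimed bound on $\E\int_0^T e^{\mu A_s}|Y^n_s|^2\,dA_s$ on the left-hand side. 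In the paper the $\mu$-term appears with the opposite sign in (\ref{b2}) (its computation effectively carries the decreasing weight $e^{\mu(A_T-A_s)}$), and the choice $\gamma_2-\mu=|\beta|$ then makes the total $dA_s$-coefficient equal to $-|\beta|$, which is what produces the term $|\beta|\E\int_0^t e^{\mu A_s}|Y^n_s|^2\,dA_s$ on the left of (\ref{a2}); you need to fix your weight convention accordingly (or restrict $\mu$ and argue separately) before your Step 5 can begin. Second, "Gronwall with respect to the measure $ds+dA_s$" after taking expectations is not a legitimate step: $A$ is random, so $\E\int_0^t(\cdot)\,dA_s$ is not an integral of $t\mapsto\E(\cdot)$ against a deterministic measure and no Gronwall lemma applies to it. The paper only needs (and only uses) Gronwall with respect to $ds$, precisely because after the parameter choices every remaining $dA_s$-term on the right is nonpositive and the only growth term left is $C\int_0^t\E\left[e^{\mu A_s}|Y^n_s|^2\right]ds$. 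A minor further slip: in your Step 3 the factor $e^{\mu A_T/2}$ multiplying $K^n_T$ cannot simply be discarded; bound $\int_0^t e^{\mu A_s}S_s\,dK^n_s\le \sup_{0\le s\le T}(e^{\mu A_s}S^+_s)\,K^n_t$ directly, as the paper does, and then apply Young's inequality.
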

\begin{proof}
From It\^{o}'s formula, it follows that
\begin{align}\label{b2}
& e^{\mu A_t}\left| Y_{t}^{n}\right|^{2}+\int_{0}^{t}e^{\mu A_s}\left\| Z_{s}^{n}\right\|
^{2}ds \nonumber\\
& \leq e^{\mu A_T}\left| \xi\right| ^{2}+2\int_{0}^{t}e^{\mu
A_s}Y_{s}^{n}f(s,Y_{s}^{n},Z_{s}^{n}) ds
+2\int_{0}^{t}e^{\mu A_s}Y_{s}^{n}\phi(s,Y_{s}^{n})dA_s-\mu\int_{0}^{t}e^{\mu A_s}|Y_{s}^{n}|^2dA_s\nonumber\\
& +\int_{0}^{t}e^{\mu
A_s}\|g(s,Y_{s}^{n},Z_{s}^{n})\|^{2}ds+2\int_{0}^{t} e^{\mu
A_s}S_{s}dK_{s}^{n}+ 2\int_{0}^{t}e^{\mu A_s}\langle Y_{s}^{n}
,g(s,Y_{s}^{n},Z_{s}^{n})dB_{s}\rangle\nonumber\\
& -2\int_{0}^{t}e^{\mu A_s}\langle
Y_{s}^{n},Z_{s}^{n}\downarrow dW_{s}\rangle,
\end{align}
where we have used
$\displaystyle{\int_{0}^{t}e^{\mu A_s}(Y^{n}_{s}-S_{s})dK^{n}_{s}\leq 0}$ and
the fact that
\begin{eqnarray*}
\int_{0}^{t}e^{\mu A_s}Y^{n}_{s}dK^{n}_{s}&=&
\int_{0}^{t}e^{\mu A_s}(Y^{n}_{s}-S_{s})dK^{n}_{s}+\int_{0}^{t}e^{\mu A_s}S_{s}dK^{n}_{s}\leq
\int_{0}^{t}e^{\mu A_s}S_{s}dK^{n}_{s}.
\end{eqnarray*}
Using $(\textbf{H}_2)$ and the elementary inequality $2ab\leq \gamma
a^{2}+\frac{1}{\gamma} b^2,\ \forall\gamma>0$,
\begin{eqnarray*}
2Y_{s}^{n}f(s,Y_{s}^{n},Z_{s}^{n})&\leq&(c\gamma_1+\frac{1}{\gamma_1})|Y^{n}_{s}|^{2}
+2c\gamma_1\|Z^{n}_{s}\|^{2}+2\gamma_1 f_s^{2},\\
2Y_{s}^{n}\phi(s,Y_{s}^{n})&\leq& (\gamma_2-2|\beta|-\mu)|Y^{n}_{s}|^{2}+\frac{1}{\gamma_2}\phi_s^{2},\\
\|g(s,Y_{s}^{n},Z_{s}^{n})\|^{2}
&\leq&(1+\gamma_3)c|Y^{n}_{s}|^{2}+\alpha(1+\gamma_3)\|Z^{n}_{s}\|^{2}+(\frac{1}{\gamma_3}+1)
g_s^{2}.
\end{eqnarray*}
Taking expectation in both sides of the inequality (\ref{b2}) and
choosing $\displaystyle\gamma_1=\frac{1-\alpha}{6c}$,
$\displaystyle\gamma_2-\mu=|\beta|$ and
$\displaystyle\gamma_3=\frac{1-\alpha}{2\alpha}$ we obtain for all
$\varepsilon >0$
\begin{align}\label{a2}
&
\E (e^{\mu A_t}\left|Y_{t}^{n}\right|^{2})+|\beta|\E\int_{0}^{t}e^{\mu A_s}\left|Y_{s}^{n}\right|^{2}dA_s
+\frac{1-\alpha}{6}\E\int_{0}^{t}e^{\mu A_s}\left\|Z_{s}^{n}\right\|^{2}ds \nonumber\\
&\leq
C\E\left\{e^{\mu A_T}|\xi|^{2}+\int^{t}_{0}e^{\mu A_s}|Y^{n}_{s}|^{2}ds+\int^{t}_{0}e^{\mu A_s}f_s^{2}ds+
\int^{t}_{0}e^{\mu A_s}\phi_s^{2}dA_s+\int^{t}_{0}e^{\mu A_s}g_s^{2}ds\right\}\nonumber\\
& +\frac{1}{\varepsilon}\E\left(\sup_{0\leq s\leq
t}(e^{\mu A_s}S_{s}^+)^2\right)+\varepsilon\E \left(K_{t}^{n}\right)^2.
\end{align}
On the other hand, we get from (\ref{h2}) that for all $0 \leq t
\leq T$,
\begin{equation}\label{K:n}
K_t^n=Y_t^n-\xi-\int_0^t f(s,Y_s^n,Z_s^n) ds -\int_0^t
\phi(s,Y_s^n) dA_s-\int_0^t g(s,Y_s^n,Z_s^n) dB_s+\int_0^t Z_s^n
\downarrow d W_s.
\end{equation}
Then we have
\begin{align}\label{estK}
\E(K^n_t)^2\leq 5\E\left\{e^{\mu A_T}|\xi|^{2}+e^{\mu A_t}|Y^{n}_t|^2+\left|\int^{t}_{0}f(s,Y^{n}_s,Z^n_s)ds\right|^{2}\right.\nonumber\\
\left.+\left|\int^{t}_{0}\phi(s,Y^n_s)dA_s\right|^{2}+\left|\int^{t}_{0}g(s,Y^{n}_s,Z^{n}_s)dB_s\right|^{2}+\left|\int^{t}_{0}Z^{n}_s\downarrow dW_s\right|^{2}\right\}.
\end{align}
It follows by H\"{o}lder inequality and the isometry equality, together with assumptions $({\bf H_{2}})(a)$ that
\begin{eqnarray*}
\left|\int^{t}_{0}f(s,Y^{n}_s,Z^n_s)ds\right|^{2}\leq
3\int^{t}_{0}e^{\mu
A_s}(f_s^2+K^2|Y^{n}_s|^{2}+K^2\|Z^{n}_s\|^{2})ds,
\end{eqnarray*}
\begin{eqnarray*}
\E\left|\int^{t}_{0}g(s,Y^{n}_s,Z^{n}_s)dB_s\right|^{2}\leq
3\E\int^{t}_{0}e^{\mu A_s}[g_s^{2}+K^2|Y^{n}_{s}|^{2}+K^2\|Z^{n}_{s}\|^{2}]ds.
\end{eqnarray*}
and
\begin{eqnarray*}
\E\left|\int^{t}_{0}Z^{n}_s\downarrow dW_s\right|^{2}\leq \E\int^{t}_{0}e^{\mu A_s}|Z^{n}_s|^{2}ds.
\end{eqnarray*}
Next, to estimate  $\left|\int^{t}_{0}\phi(s,Y^n_s)dA_s\right|^{2}$,
let us assume first that $A_T$ is a bounded  real variable. For any $\mu>0$ given in assumptions $({\bf
H}_1)$ or $({\bf H}_2)(a)$, we have
\begin{eqnarray*}
\left|\int^{t}_{0}\phi(s,Y^{n}_s)dA_s\right|^{2}&\leq& \left(\int^{t}_{0}e^{-\mu A_s}dA_s\right)\left(\int^{t}_{0}e^{\mu A_s}|\phi(s,Y^{n}_s)|^{2}dAs\right)\nonumber\\
&\leq& \frac{2}{\mu}\int^{t}_{0}e^{\mu A_s}(\phi_s^2+K^2|Y^{n}_s|^{2})dA_s,\label{psi}
\end{eqnarray*}
since
\begin{eqnarray*}
\left(\int^{t}_{0}e^{-\mu A_s}dA_s\right)\leq \frac{1}{\mu}[1-e^{-\mu A_T}]\leq \frac{1}{\mu}.
\end{eqnarray*}
The general case then follows from Fatou's lemma.

Therefore, from $(\ref{estK})$ together with the previous
inequalities, there exists a constant independent of $A_T$ such that
\begin{align}\label{kn1}
\E(K^n_t)^2&\leq C\E\left\{e^{\mu A_T}|\xi|^{2}+e^{\mu
A_t}|Y^{n}_t|^2+\int^{t}_{0}e^{\mu A_s}f_s^{2}ds+ \int^{t}_{0}e^{\mu
A_s}\phi_s^{2}dA_s+\int^{t}_{0}e^{\mu A_s}g_s^{2}ds\right.\nonumber\\
&\left.+\int_0^t e^{\mu
A_s}\left|Y_s^n\right|^2ds+\E\left(\sup_{0\leq s\leq t}e^{\mu
A_s}(S_{s}^+)^2\right)+\int_0^t e^{\mu A_s}
\left|Y_s^n\right|^2dA_s+\int_0^t e^{\mu A_s}\|Z_s^n\|^2ds\right\}.
\end{align}
Recalling again $(\ref{a2})$ and taking $\varepsilon$ small enough
such that $\varepsilon C<\min\{1,|\beta|,\frac{1-\alpha}{6}\}$, we
obtain
\begin{align*}
&
\E e^{\mu A_t}\left|Y_{t}^{n}\right|^{2}+\E\int_{0}^{t}e^{\mu A_s}\left|Y_{s}^{n}\right|^{2}dA_s
+\E\int_{0}^{t}e^{\mu A_s}\left\|Z_{s}^{n}\right\|^{2}ds \\
&\leq
C\E\left\{e^{\mu A_T}|\xi|^{2}+\int^{t}_{0}e^{\mu A_s}|Y^{n}_{s}|^{2}ds+\int^{t}_{0}e^{\mu A_s}f_s^{2}ds+
\int^{t}_{0}e^{\mu A_s}\phi_s^{2}dA_s\right.\\
&\left.+\int^{t}_{0}e^{\mu A_s}g_s^{2}ds
+\E\left(\sup_{0\leq s\leq T}e^{\mu A_s}(S_{s}^+)^2\right)\right\}
\end{align*}
Consequently, it follows from Gronwall's lemma and (\ref{kn1}) that
\begin{align*}
&
\E\left\{e^{\mu A_t}|Y_{t}^{n}|^{2}+\int_{0}^{t}e^{\mu A_s}\left|Y_{s}^{n}\right|^{2}dA_s
+\int_{0}^{t}e^{\mu A_s}\|Z_{s}^{n}\|^{2}ds+|K_{T}^{n}|^{2}\right\}\\
&\leq
C\E\left\{e^{\mu A_T}|\xi|^{2}+\int^{T}_{0}e^{\mu A_s}f_s^{2}ds+\int^{T}_{0}e^{\mu A_s}\phi_s^{2}dA_s
+\int^{T}_{0}e^{\mu A_s}g_s^{2}ds+\sup_{0\leq t\leq T}e^{\mu A_t}(S_{t}^{+})^{2}\right\}.
\end{align*}
Finally, by  application of Burkholder-Davis-Gundy inequality we obtain from $(\ref{b2})$
\begin{eqnarray*}
\E\left\{\sup_{0\leq t\leq T}
e^{\mu A_t}|Y_{t}^{n}|^{2}+\int_{0}^{T}e^{\mu A_s}\|Z_{s}^{n}\|^{2}ds+|K_{T}^{n}|^{2}\right\}
&\leq & C\E\left\{e^{\mu A_T}|\xi|^{2}+\int^{T}_{0}e^{\mu A_s}f_s^{2}ds+\int^{T}_{0}e^{\mu A_s}\phi_s^{2}dA_s\right.\nonumber\\
&&+\left.\int^{T}_{0}e^{\mu A_s}g_s^{2}ds+\sup_{0\leq t\leq
T}e^{\mu A_t}(S_{t}^{+})^{2}\right\},
\end{eqnarray*}
which end the proof of this Lemma.
\end{proof}

Now we give a convergence result which is the key point on the proof
of our main result. We begin by supposing that  $g $ is independent
from $\left( Y,Z\right)$. More precisely, we consider the following
equation
\begin{eqnarray}
Y_{t}&=&\xi+\int_{0}^{t}f(s,Y_s,Z_s)ds+
\int_{0}^{t}\phi(s,Y_s)dA_s+\int_{0}^{t}g(s)\,dB_{s}-\int_{0}^{t}Z_{s}\downarrow
dW_{s}+K_t. \nonumber\\
&&\label{h4}
\end{eqnarray}
The penalized equation is given by
\begin{eqnarray}
Y_{t}^{n}&=&\xi+\int_{0}^{t}f(s,Y_s^n,Z_s^n)ds+n\int_{0}^{t}(Y_{s}^{n}-S_{s})^{-}ds+
\int_{0}^{t}\phi(s,Y_{s}^{n})dA_s\nonumber\\
&&+\int_{0}^{t}g(s)\,dB_{s}-\int_{0}^{t}Z_{s}^{n}\downarrow dW_{s}.
\label{h3}
\end{eqnarray}
\noindent Since the sequence of functions $(y\mapsto
n(y-S_{t})^{-})_{n\geq 1}$ is nondecreasing, then  thanks to the
comparison theorem \ref{Thm:comp}, the sequence $\left( Y^{n}\right)
_{n>0}$ is non-decreasing. Hence, Lemma \ref{lem:1} implies that
there exists a $\mathcal{F}_t$- progressively measurable process $Y$
such that $Y_{t}^{n}\nearrow Y_{t}$ $a.s$. So the following result
holds.
\begin{lemma}\label{lem:2}
If $g $ does not dependent on $\left( Y,Z\right)$, then for
each $n\in\N^{*}$,
\begin{eqnarray*}
\E\left(\sup_{0\leq t\leq T }\left|\left( Y_{t}^{n}-S_{t}\right)
^{-}\right| ^{2}\right)\longrightarrow 0,\,\,\ \mbox{as}\,\ n
\longrightarrow \infty.
\end{eqnarray*}
\end{lemma}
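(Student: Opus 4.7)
My plan combines the classical monotone penalization argument with an approximation of the (non-semimartingale) obstacle $S$ by It\^o processes; the hypothesis $g=g(s)$ is essential to keep the $dB$-stochastic integral innocuous in the It\^o estimates.

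\textit{Monotonicity and a.e.\ domination.} The difference $f_{n+1}(s,y,z)-f_{n}(s,y,z)=(y-S_s)^{-}\ge 0$ makes the drifts monotone in $n$, so Theorem~\ref{Thm:comp} yields $Y^{n}\le Y^{n+1}$ $\P$-a.s. Together with the uniform $S^{2}$-bound of Lemma~\ref{lem:1}, the pointwise limit $Y:=\lim_n\uparrow Y^{n}$ exists and belongs to $S^{2}([0,T];\R)$. Since $K^{n}_{T}=n\int_{0}^{T}(Y^{n}_{s}-S_{s})^{-}ds$ satisfies $\sup_n\E|K^{n}_{T}|^{2}\le C$, Cauchy--Schwarz yields $\E\int_{0}^{T}(Y^{n}_{s}-S_{s})^{-}ds\le C^{1/2}/n\to 0$. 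The nonnegative integrands $(Y^{n}-S)^{-}$ decrease to $(Y-S)^{-}$, so monotone convergence gives $\int_{0}^{T}(Y_{s}-S_{s})^{-}ds=0$ a.s., i.e.\ $Y_{s}\ge S_{s}$ for Lebesgue-a.e.\ $s$ almost surely.

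\textit{Upgrading to sup-$L^{2}$ convergence.} To prove $\E\sup_{t}((Y^{n}_{t}-S_{t})^{-})^{2}\to 0$ I would regularize $S$ by a sequence $(S^{m})$ of It\^o semimartingales with $\E\sup_{t}|S_{t}-S^{m}_{t}|^{2}\to 0$ (obtained e.g.\ via time-mollification). Applying It\^o--Meyer's formula to $e^{\mu A_{t}}((Y^{n}_{t}-S^{m}_{t})^{-})^{2}$ produces a penalization contribution
$$-2\int_{0}^{t}e^{\mu A_{s}}(Y^{n}_{s}-S^{m}_{s})^{-}dK^{n}_{s}=-2n\int_{0}^{t}e^{\mu A_{s}}(Y^{n}_{s}-S^{m}_{s})^{-}(Y^{n}_{s}-S_{s})^{-}ds\le 0,$$
a quadratic-variation correction that is harmless because $g$ does not depend on $(Y,Z)$, a $dB$-martingale term handled by Burkholder--Davis--Gundy, and error terms bounded by $\E\sup_{t}|S_{t}-S^{m}_{t}|^{2}$ and $\E|K^{n}_{T}|^{2}$. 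Collecting these yields an estimate of the form $\E\sup_{t}e^{\mu A_{t}}((Y^{n}_{t}-S^{m}_{t})^{-})^{2}\le\varepsilon(n,m)$, and sending first $n\to\infty$ then $m\to\infty$, together with the pointwise bound $(Y^{n}-S)^{-}\le(Y^{n}-S^{m})^{-}+|S-S^{m}|$, concludes.

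\textit{Main obstacle.} The delicate point is the It\^o estimate in the second step, specifically the bookkeeping of all terms produced by replacing $S$ with the semimartingale $S^{m}$: one must ensure that the drift, $dB$- and $\downarrow dW$-martingale parts of $S^{m}$ do not enter with signs that destroy the favorable sign of the penalization term, and that they vanish uniformly in $n$ as $m\to\infty$. The hypothesis that $g$ is independent of $(Y,Z)$ is precisely what prevents uncontrolled terms of the form $\alpha\|Z^{n}\|^{2}$ from appearing and coupling the estimate for $(Y^{n}-S^{m})^{-}$ to one for $Z^{n}$; this is the structural reason the lemma is first established in the simplified setting, before being extended to general $g$ via a contraction/Picard step.
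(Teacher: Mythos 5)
Your first step (monotonicity via Theorem \ref{Thm:comp}, the uniform bounds and the bound on $K^n_T$ from Lemma \ref{lem:1}, Cauchy--Schwarz, and monotone convergence) is correct but only yields $Y_s\ge S_s$ for Lebesgue-a.e.\ $s$, and the lemma does not follow from it; everything therefore rests on your second step, whose central claim is not established. You claim an estimate $\E\sup_t e^{\mu A_t}((Y^n_t-S^m_t)^-)^2\le\varepsilon(n,m)$ with $\varepsilon\to 0$ as $n\to\infty$ first, then $m\to\infty$, but the only term in your It\^o--Meyer expansion carrying an explicit factor $n$ is the penalization term, and you use it only through its sign, i.e.\ you drop it from the upper bound; after that, nothing on the right-hand side decays in $n$ ($\E|K^n_T|^2$ is merely bounded, not small), so no mechanism for the $n$-limit is present. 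Moreover the expansion necessarily contains the quadratic-variation contribution $\int_0^t{\bf 1}_{\{Y^n_s<S^m_s\}}|g(s)|^2ds$, which is not small uniformly in $n,m$, and the drift and martingale parts of the mollified obstacle $S^m$: even after letting $n\to\infty$ first, the drift term is only bounded by a quantity of the type $\E\int_0^T|S_s-S^m_s|\,|U^m_s|\,ds$, which need not vanish as $m\to\infty$ because $\int|U^m|$ blows up exactly as fast as $|S-S^m|$ shrinks when $S$ is not a semimartingale. You yourself flag this bookkeeping as ``the main obstacle,'' but it is the entire difficulty, and the independence of $g$ from $(Y,Z)$ does not, by itself, neutralize the $|g(s)|^2$ term in your computation.

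For comparison, the paper's proof is complete and uses the hypothesis on $g$ quite differently: one shifts $\xi$, $S$ and $Y^n$ by $\int_\cdot^T g(s)dB_s$, which removes the $dB$-integral from the penalized equation altogether; one then dominates $\overline Y^n$ from below (Theorem \ref{Thm:comp}) by the solution $\widetilde Y^n$ of an explicitly solvable linear equation, evaluates the resulting conditional-expectation formula at an arbitrary stopping time $\nu$, and lets $n\to\infty$ to obtain $\overline Y_\nu\ge\overline S_\nu$, hence $Y_\nu\ge S_\nu$, for every stopping time; the Section Theorem of Dellacherie--Meyer upgrades this to $Y_t\ge S_t$ for all $t$ simultaneously (the ``all $t$ versus a.e.\ $t$'' point, unavoidable since continuity of $Y$ is not yet known), and then Dini's theorem plus dominated convergence give $\E\sup_t|(Y^n_t-S_t)^-|^2\to 0$. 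If you wish to salvage your route, you must extract quantitative decay from the full negative term $-2n\int e^{\mu A_s}(Y^n_s-S^m_s)^-(Y^n_s-S_s)^-ds$ rather than discarding it, and you must build the component $\int_\cdot^T g(s)dB_s$ into the approximating obstacle to kill the $|g(s)|^2$ term --- at which point you are essentially reconstructing the paper's shift argument.
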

\begin{proof}
Since $Y^{n}_{t}\geq Y^{0}_{t}$, we can w.l.o.g. replace $S_{t}$ by
$S_{t}\vee Y^{0}_{t}$, i.e. we may assume that\\ $\E(\sup_{0\leq
t\leq T}S_{t}^{2})<\infty$. We want to compare a.s. $Y_t$ and $S_t$
for all $t\in[0,T]$, while we do not know yet if $Y$ is a.s.
continuous. Indeed, let us introduce the following processes
$$
\left\{
\begin{array}{ll}
&\displaystyle \overline{\xi}:=\xi+\int_{0}^{T}g\left(
s\right)  dB_{s}\\
& \displaystyle\overline{S}_{t}:=S_{t}+\int_{t}^{T}g\left(
s\right)  dB_{s}\\
& \displaystyle\overline{Y}_{t}^{n}:=Y_{t}^{n}+\int_{t} ^{T}g\left(
s\right)  dB_{s}
\end{array}
\right.
$$
Hence,  \begin{equation}\label{Ynbar}
\overline{Y}_{t}^{n}=\overline{\xi}+\int_{0}^{t}f\left(
s,Y_s^n,Z_s^n\right) ds+n\int_{0}^{t}\left(\overline{Y}_{s}^{n}
-\overline{S_{s} }\right) ^{-}ds+\int_{0}^{t}\phi\left(s,
Y_s^n\right) dA_s-\int_{0}^{t}Z_{s}^{n} \downarrow dW_{s}.
\end{equation}
and we define \quad
$\overline Y_t:=\displaystyle\sup_n \overline Y^n_t$.\\
From Theorem \ref{Thm:comp}, we have that a.s.,
$\overline{Y}^{n}_t\geq\widetilde{Y}_{t}^{n},\, 0\leq t\leq T, \,
n\in\N^{*},$ where $\left\{
(\widetilde{Y_{t}}^{n},\widetilde{Z}_{t}^{n}),\mbox{ }0\leq t\leq
T\right\} $ is the unique solution of the  BSDE
\begin{eqnarray*}
\widetilde{Y}_{t}^{n} &=&\overline{S}_{T}+\int_{0}^{t}f\left(
s,Y_s^n,Z_s^n\right)
ds+n\int_{0}^{t}(\overline{S}_{s}-\widetilde{Y}_{s}^{n})ds
+\int_{0}^{t}\phi\left(s, Y_s^n\right)
dA_s-\int_{0}^{t}\widetilde{Z} _{s}^{n}\downarrow dW_{s}.
\end{eqnarray*}
Let ${\bf G}=(\mathcal{G}_{t})_{0\leq t\leq T}$ be a filtration defined by $\mathcal{G}_{t}=\mathcal{F}^{W}_{t,T}\otimes\mathcal{F}^{B}_{0,T}$. We consider $\nu $
a ${\bf G}$-stopping time such that $0\leq \nu \leq T$.
So we can write
\begin{eqnarray}
\widetilde{Y}_{\nu }^{n} &=&\E\left\{ e^{-n\nu}\overline{S}_{T}
+\int_{ 0}^{\nu }e^{-n(\nu-s)}f(s,Y_{s}^{n},Z_{s}^{n})ds+n\int_{
0}^{\nu }e^{-n(\nu-s)}\overline{S}_{s}ds \right.\nonumber \\
&&\left.+\int_{ 0}^{\nu }e^{-n(\nu-s)}\phi(s,Y_{s}^{n})dA_s\mid
{\cal G} _{\nu }\right\}. \label{c'2}
\end{eqnarray}
First, with the help of H\"{o}lder inequality and assumptions $({\bf
H}_2)(a)$, we have
 \begin{eqnarray*}
\E\left(\int_{0}^{\nu}e^{-n(\nu-s)}f(s,Y_{s}^{n},Z_{s}^{n})ds\right)^{2}&\leq&
\frac{1}{2n}\E\left( \int_{0 }^{\nu}|
f(s,Y_{s}^{n},Z_{s}^{n}|^{2} ds\right)\\
&\leq& \frac{C}{2n}\E\left( \int_{0 }^{T}
e^{\mu A_s}(f_s^{2}+|Y_{s}^{n}|^{2}+\|Z_{s}^{n}\|^{2}) ds\right),
\end{eqnarray*}
which provide
\begin{eqnarray}
\E\left(\int_{0}^{\nu}e^{-n(\nu-s)}f(s,Y_{s}^{n},Z_{s}^{n})ds\right)^{2}\longrightarrow
0 \,\;\mbox{as}\,\; n\rightarrow \infty,\label{ds}
\end{eqnarray}
since $\E\left( \int_{0 }^{T} e^{\mu
A_s}(f_s^{2}+|Y_{s}^{n}|^{2}+\|Z_{s}^{n}\|^{2}) ds\right)<C$ (see
Lemma 2.1 and $({\bf H}_2)(a)$).

Next, to prove that
\begin{eqnarray}
\E\left(\int_{0}^{\nu}e^{-n(\nu-s)}\phi(s,Y_{s}^{n})dA_s\right)^{2}\longrightarrow
0 \,\;\mbox{as}\;\;n\rightarrow \infty,\label{A}
\end{eqnarray}
let first suppose that there exists $C_1$ such that $\|A_T\|_{\infty}<C_1$. Using again H\"{o}lder inequality, Lemma 2.1 and assumption $({\bf H}_2)(a)$, we
get
\begin{eqnarray*}
\E\left(\int_{0}^{\nu}e^{-n(\nu-s)}\phi(s,Y_{s}^{n})dA_s\right)^{2}
&\leq& \E\left[\left(\int_{0}^{T}e^{-[2n(\nu-s)+\mu A_{s}]}dA_s\right)\left(
\int_{0 }^{T}e^{\mu A_s}| \phi(s,Y_{s}^{n})|^{2} dA_s\right)\right]\\
&\leq &\frac{1}{\mu}(1-e^{-\mu C_1})\E\left(
\int_{0 }^{T}e^{\mu A_s}(\phi_s^2+K|Y_{s}^{n}|^{2}) dA_s\right)\\
&\leq & C
\end{eqnarray*}
where $C$ is independent of $A_T$. The result follows by Lebesgue
dominated Theorem, since
$\int_{0}^{\nu}e^{-n(\nu-s)}\phi(s,Y_{s}^{n})dA_s\rightarrow 0 \,\,
a.s.\,\mbox{as}\,\, n\rightarrow \infty$. On the other hand it is
easily seen that
\begin{eqnarray}
e^{-n\nu}\overline{S}_{T}+n\int_{0}^{\nu }e^{-n(\nu-s)}\overline{S}_{s}ds\rightarrow \overline{S}_{\nu
}{\bf 1}_{\{\nu >0\}}+\overline{S}_{T}{\bf 1}_{\{\nu =0\}}\,\, a.s.\,\mbox{as}\,\, n\rightarrow \infty.\label{S}
\end{eqnarray}
According to $(\ref{ds})$-$(\ref{S})$, the equality $(\ref{c'2})$
provides
$$\widetilde{Y}_{\nu }^{n}\longrightarrow \overline{S}_{\nu
}{\bf 1}_{\{\nu >0\}}+\overline{S}_{T}{\bf 1}_{\{\nu =0\}}\ a.s.$$ and in $L^{2}(\Omega)$, as $n\rightarrow \infty$, and
$\overline{Y}_{\nu}\geq \overline{S}_{\nu}$ a.s. which yields that
${Y}_{\nu}\geq {S}_{\nu}$ a.s. From this and the Section Theorem in
Dellacherie and Meyer \cite{DM}, it follows that the last inequality holds for all
$t\in[0,T]$. Further $(Y_{t}^{n}-S_{t})^{-}\downarrow 0 $, a.s. and
from Dini's theorem, the convergence is uniform in $t$. Finally, as
$\displaystyle{(Y_{t}^{n}-S_{t})^{-}\leq (S_{t}-Y_{t}^{0})^{+}\leq
\left| S_{t}\right| +\left| Y_{t}^{0}\right|}$, the dominated
convergence theorem ensures that
\begin{eqnarray*}
\lim_{n\longrightarrow +\infty }\E(\sup_{0\leq t\leq T }|\left(
Y_{t}^{n}-S_{t}\right) ^{-}| ^{2})=0.
\end{eqnarray*}
\end{proof}

\noindent \begin{proof}[Proof of Theorem \ref{Th:exis-uniq}] {\bf
Existence} The proof of existence will be divided in two steps.

\noindent \textit{Step}\ 1.  $g $ does not dependent on $\left(
Y,Z\right)$.

\noindent Recall that $Y_{t}^{n}\nearrow Y_{t}$ $a.s$. Then,
Fatou's lemma and Lemma \ref{lem:1} ensure
\begin{eqnarray*}
\E\left( \sup_{0\leq t\leq T }e^{\mu A_t}\left| Y_{t}\right| ^{2}\right)
<+\infty,
\end{eqnarray*}
It then follows from Lemma \ref{lem:1} and Lebegue's dominated
convergence theorem  that
\begin{eqnarray}
\E\left( \int_{0}^{T }\left| Y_{s}^{n}-Y_{s}\right| ^{2} ds
\right)\longrightarrow 0,\,\,\ \mbox{as}\,\  n\rightarrow
\infty.\label{b9}
\end{eqnarray}

Next, we will prove that the sequence of processes\ $Z^n$\ converges
in $M^{2}(0,T;\R^{d})$ To this end, for\ $n \geq p \geq 1$,
It\^{o}'s formula provide
\begin{eqnarray*}
&&\left| Y_{t}^{n}-Y_{t}^{p}\right| ^{2}+\int_{0 }^{t }\|
Z_{s}^{n}-Z_{s}^{p}\| ^{2}ds  \nonumber \\
&=&2\int_{0 }^{t
}(Y_{s}^{n}-Y_{s}^{p})[f(s,Y_{s}^{n},Z_{s}^{n})-f(s,Y_{s}^{p},Z_{s}^{p})]ds+2\int_0^{t}
(Y_{s}^{n}-Y_{s}^{p})[\phi(s,Y_{s}^{n})-\phi(s,Y_{s}^{p})]dA_s\\
&&-2\int_{0}^{t}\langle Y_{s}^{n}-Y_{s}^{p}
,[Z_{s}^{n}-Z_{s}^{p}]\downarrow
dW_{s}\rangle+2\int_{0}^{t}(Y_{s}^{n}-Y_{s}^{p})(dK_{s}^{n}-dK_{s}^{p})
. \label{b10}
\end{eqnarray*}
From the same step as before, by using again assumptions
$({\bf H}_2)$, there exists a constant $C>0$, such that
\begin{eqnarray*}
&&\E\left\{\left| Y_{t}^{n}-Y_{t}^{p}\right|^{2}+
\int_{0}^{t}\left| Y_{s}^{n}-Y_{s}^{p}\right| ^{2}dA_s +
\int_{0}^{t}\left\|
Z_{s}^{n}-Z_{s}^{p}\right\| ^{2}ds\right\} \\
&\leq &
C\E\left\{\int_{0}^{t}|Y_{s}^{n}-Y_{s}^{p}|^{2}ds+\sup_{0\leq s\leq
T}\left(Y_{s}^{n}-S_{s}\right)^{-}K_{T}^{p} +\sup_{0\leq s\leq
T}\left( Y_{s}^{p}-S_{s}\right) ^{-} K_{T}^{n}\right\},
\end{eqnarray*}
which, by Gronwall lemma, H\"{o}lder inequality and Lemma \ref{lem:1} implies
\begin{eqnarray*}
&&\E\left\{\left| Y_{t}^{n}-Y_{t}^{p}\right|^{2} +\int_{0}^{t}\left| Y_{s}^{n}-Y_{s}^{p}\right| ^{2}dA_s+ \int_{0}^{t}\left\|
Z_{s}^{n}-Z_{s}^{p}\right\| ^{2}ds\right\}\nonumber\\ &\leq &
C\left\{\E\left(\sup_{0\leq s\leq T}|\left( Y_{s}^{n}-S_{s}\right)
^{-}|^{2}\right)\right\}^{1/2}+C\left\{ \E\left(\sup_{0\leq s\leq T}|\left(
Y_{s}^{p}-S_{s}\right) ^{-}|^{2}\right)\right\}^{1/2}. \label{b11'}
\end{eqnarray*}
Finally, from Burkh\"{o}lder-Davis-Gundy's inequality, we obtain
\begin{eqnarray*}
\E\left( \sup_{0\leq s\leq T}\left| Y_{s}^{n}-Y_{s}^{p}\right|
^{2}+\int_{0}^{t}\left| Y_{s}^{n}-Y_{s}^{p}\right| ^{2}dA_s+\int_{0}^{T}\left\| Z_{s}^{n}-Z_{s}^{p}\right\| ^{2}ds\right)
\longrightarrow 0,\,\, \mbox{ as }\,  n,p\longrightarrow
\infty,\label{b12}
\end{eqnarray*}
which provides that the sequence of processes $(Y^{n},Z^{n})$ is
Cauchy in the Banach space $S^{2}([0,T];\R)\times
M^{2}(0,T;\R^{d})$. Consequently, there exists a couple $(Y,Z)\in
S^{2}([0,T];\R)\times M^{2}(0,T;\R^{d})$ such that
\begin{eqnarray*}
\E\left\{\sup_{0\leq s\leq T}\left| Y_{s}^{n}-Y_{s}^{{}}\right|
^{2}+\int_{0}^{t}\left| Y_{s}^{n}-Y_{s}\right|
^{2}dA_s+\int_{0}^{T}\left\|Z^{n}_{s}-Z_{s}\right\|^{2}ds\right)
\rightarrow 0,\mbox{ as }n\rightarrow \infty .
\end{eqnarray*}
On the other hand, we rewrite (\ref{K:n}) as
\begin{equation}\label{K:n1}
K_t^n=Y_t^n-\xi-\int_0^t f(s,Y_s^n,Z_s^n) ds -\int_0^t \phi(s,Y_s^n)
dA_s-\int_0^t g(s) dB_s+\int_0^t Z^n_s \downarrow d W_s.
\end{equation}
By the convergence of $Y^n,\ Z^n$ (for a subsequence), the fact that
$f,\phi$ are continuous and
\begin{itemize}
\item $\sup_{n\geq0}|f(s,Y^n_s,Z_s)|\leq f_s+K\left\{(\sup_{n\geq0}|Y_s^n|)+\|Z_s\|\right\}$,
\item $\sup_{n\geq0}|\phi(s,Y^n_s)|\leq \phi_s+K\left\{(\sup_{n\geq0}|Y^n_s|)\right\}$,
\item $\mathbb{E}\int_0^T|f(s,Y^n_s,Z^n_s)-f(s,Y^n_s,Z_s)|^2ds\leq C\mathbb{E}\int_0^T\|Z^n_s-Z_s\|^2ds$
\end{itemize}
we get the existence of a  process $K$ which verifies for all
$t\in[0,T]$
$$\E\left| K_{t}^{n}-K_{t}^{{}}\right| ^{2}\longrightarrow 0$$
and such that $\mathbb{P}$-a.s. and for all $t\in[0,T]$,
$$Y_t=\xi+\int_0^tf(s,Y_s,Z_s)ds+\int_0^t\phi(s,Y_s)dA_s+K_t+\int_0^t g(s) dB_s-\int_0^tZ_s \downarrow dW_s.$$

It remains to show that $(Y,Z,K)$ solves the reflected BSDE $(\xi,
f,\phi,g,S)$. In this fact, since $(Y^{n}_t,K^{n}_t)_{0\leq t\leq
T}$ tends to $(Y_t,K_t)_{0\leq t\leq T}$ in
probability uniformly in t , the measure $dK^n$ converges to $dK$ weakly in
probability, so that
$\int_{0}^{T}(Y_{s}^{n}-S_{s})dK_{s}^{n}\rightarrow
\int_{0}^{T}(Y_{s}-S_{s})dK_{s}$ in probability as $n\rightarrow
\infty$. On the other hand, in view of Lemma \ref{lem:2}, $Y_{t}\geq
S_{t}$ a.s., and thus $\int_{0}^{T}(Y_{s}-S_{s})dK_{s}\geq 0$.
Moreover, $\int_{0}^{T}(Y_{s}^{n}-S_{s})dK_{s}^{n}=-n\int_{0}^{T}|
(Y_{s}^{n}-S_{s})^{-}| ^{2}ds \leq 0$ and passing to the limit we
get $\int_{0}^{T}(Y_{s}-S_{s})dK_{s}\leq 0$, which together with the
above proved $(ii)$ of the definition.

\noindent \textit{Step}\ 2. The general case. In light of the above
step, and for any $(\bar{Y},\bar{Z})\in S^{2}([0,T];\R)\times M^{2}(0,T;\R^{d})$, the BDSDE
\begin{equation*}
Y_{t}=\xi+\int_{0}^t f(s,{Y}_s,{Z}_s)ds
+\int_{0}^{t}\phi(s,Y_s)dA_s+\int_{0}^{t}g(s,\bar{Y}_s,\bar{Z}_s)\,dB_{s}
-\int_{0}^{t}Z_{s}\downarrow dW_{s}+K_t
\end{equation*}
has a unique solution $(Y,Z,K)\in S^{2}([0,T];\R)\times M^{2}(0,T;\R^{d})$. So, we can define the mapping
$$
\begin{array}{lrlll}
\Psi:&S^{2}([0,T];\R)\times M^{2}(0,T;\R^{d})&\longrightarrow&S^{2}([0,T];\R)\times M^{2}(0,T;\R^{d})\\
&(\bar{Y},\bar{Z})&\longmapsto&(Y,Z)=\Psi(\bar{Y},\bar{Z}).
\end{array}
$$
Now, let $(Y,Z),\ (Y',Z'),\ (\bar{Y},\bar{Z})$ and
$(\bar{Y'},\bar{Z'})\in S^{2}([0,T];\R)\times M^{2}(0,T;\R^{d})$ such that
$(Y,Z)=\Psi(\bar{Y},\bar{Z})$ and $(Y',Z')=\Psi(\bar{Y'},\bar{Z'})$.
Put $\Delta \eta=\eta-\eta'$ for $\eta=Y,\bar{Y},Z,\bar{Z}$. By
virtue of It\^o's formula, we have
\begin{eqnarray*}
&&\E e^{\mu t+\beta A_t}|\Delta Y_t|^2+\E\int_0^t e^{\mu s+\beta A_s}\|\Delta Z_s\|^2ds\\
&&=2\E\int_0^t e^{\mu s+\beta A_s}\Delta
Y_s\left\{f(s,{Y}_s,{Z}_s)-f(s,{Y'}_s,{Z'}_s)\right\}ds
+2\E\int_0^t e^{\mu s+\beta A_s} \Delta Y_s\left\{\phi(s,{Y}_s)-\phi(s,{Y'}_s)\right\}dA_s\\
 &&+2\E\int_0^t e^{\mu s+\beta A_s} \Delta Y_s d(\Delta K_s)+\int_0^te^{\mu s+\beta A_s}\left\|g(s,\bar{Y}_s,\bar{Z}_s)-g(s,\bar{Y'}_s,\bar{Z'}_s)\right\|^2ds\\
 &&-\mu\E\int_0^t
e^{\mu s+\beta A_s} \left|\Delta Y_s\right|^2 ds-\beta\E\int_0^t
e^{\mu s+\beta A_s} \left|\Delta Y_s\right|^2 dA_s.
\end{eqnarray*}
But since $\displaystyle\E\int_0^t e^{\mu s+\beta A_s} \Delta Y_s
d(K_s-K'_s)\leq 0$, then from $({\bf H}_2)$ there exists
$\alpha<\alpha'<1$ such that
\begin{eqnarray*}
&&\E e^{\mu t+\beta A_t}|\Delta Y_t|^2+\alpha\E\int_0^t e^{\mu s+\beta A_s}\|\Delta Z_s\|^2ds\\
&&\leq \left(\frac{1-\alpha'}{c}+1-\alpha'-\mu\right)\E\int_0^t
e^{\mu s+\beta A_s}|\Delta Y_s|^2 ds+\beta\E\int_0^t e^{\mu s+\beta
A_s}|\Delta
Y_s|^2 dA_s\\
&&+c\E\int_0^te^{\mu s+\beta A_s}|\Delta\bar{Y}_s|^{2}ds+\alpha\E\int_0^te^{\mu s+\beta A_s}|\Delta\bar{Z}_s|^2ds
\end{eqnarray*}
Next, denote $\displaystyle\gamma=\frac{1-\alpha'}{c}+1-\alpha'$ and
choosing \ $\mu$\ such that\ $\displaystyle
\mu-\gamma=\frac{\alpha'c}{\alpha}$, we obtain
\begin{eqnarray*}
&&\bar{c}\E\int_0^t
e^{\mu s+\beta A_s} \left|\Delta Y_s\right|^2 ds+|\beta|\E\int_0^t e^{\mu s+\beta A_s}|\Delta
Y_s|^2 dA_s+\E\int_0^t e^{\mu s+\beta A_s}\|\Delta Z_s\|^2ds\\
&&\leq \frac{\alpha}{\alpha'}\left(\bar{c}\E\int_0^t e^{\mu s+\beta
A_s}\left|\Delta \bar {Y}_s\right|^2ds +\E\int_0^te^{\mu
s+\beta A_s}\left\|\Delta\bar{Z}_s)\right\|^2ds\right),\\
&&\leq \frac{\alpha}{\alpha'}\left(\bar{c}\E\int_0^t e^{\mu s+\beta
A_s}\left|\Delta \bar {Y}_s\right|^2ds +|\beta|\E\int_0^t e^{\mu
s+\beta A_s}\left|\Delta \bar {Y}_s\right|^2dA_s+\E\int_0^te^{\mu
s+\beta A_s}\left\|\Delta\bar{Z}_s)\right\|^2ds\right)
\end{eqnarray*}
where  $\displaystyle \bar{c}=\frac{c}{\alpha}$.\\

Now, since $\displaystyle\frac{\alpha}{\alpha'}<1$, then it follows
that $\Psi$ is a strict contraction on
$\mathcal{S}^{2}([0,T],\R)\times \mathcal{M}^{2}((0,T);\R^d)$
equipped with the norm
\begin{eqnarray*}
\|(Y,Z)\|^{2}=\bar{c}\E\int_0^t
e^{\mu s+\beta A_s} \left|Y_s\right|^2 ds+|\beta|\E\int_0^t e^{\mu s+\beta A_s}|
Y_s|^2 dA_s+\E\int_0^t e^{\mu s+\beta A_s}\|Z_s\|^2ds
\end{eqnarray*}
and it has a unique fixed point, which is the unique solution  of
our BDSDE.

\noindent{\bf Uniqueness} Let us define
\begin{eqnarray*}
\left\{ \left( \Delta Y_{t},\Delta Z_{t},\Delta K_{t}\right) ,\mbox{
}0\leq t\leq T \right\} =\left\{ (Y_{t}-Y_{t}^{\prime
},Z_{t}-Z_{t}^{\prime },K_{t}-K_{t}^{\prime }),\mbox{ }0\leq t\leq T
\right\}
\end{eqnarray*}
where
 $\displaystyle{\left\{
\left( Y_{t},Z_{t},K_{t}\right) ,\mbox{ }0\leq t\leq T
\right\}}$ and $\displaystyle{\left\{ (Y_{t}^{\prime },Z_{t}^{\prime },K_{t}^{\prime }),%
\mbox{ }0\leq t\leq T\right\}}$ denote two solutions of the
reflected BDSDE associated to the data $(\xi,f,g,\phi,S)$.

It follows again by It\^{o}'s formula that for every $0\leq t\leq T$
\begin{eqnarray*}
&&\left| \Delta Y_{t}\right| ^{2}+\int_{0}^{t}\|\Delta Z_{s}\| ^{2}ds \\
&=& 2\int_{0}^{t}\Delta
Y_{s}(f(s,Y_{s},Z_{s}^{{}})-f(s,Y_{s}^{\prime },Z_{s}^{\prime
}))ds+\int_{0}^{t}\|g(s,Y_{s},Z_{s}^{{}})-g(s,Y_{s}^{\prime
},Z_{s}^{\prime })\|^{2}ds\nonumber\\
&&+2\int_{0}^{t}\Delta Y_{s}(\phi(s,Y_{s})-\phi(s,Y_{s}^{\prime
}))dA_s+\int_{0}^{t}\langle \Delta
Y_{s},(g(s,Y_{s},Z_{s}^{{}})-g(s,Y_{s}^{\prime },Z_{s}^{\prime
}))dB_{s}\rangle \nonumber\\
&&-2\int_{0}^{t}\langle \Delta Y_{s},\Delta Z_{s}dW_{s}\rangle
+2\int_{0}^{t}\Delta Y_{s}d(\Delta K_s).
\end{eqnarray*}
Since
\begin{eqnarray*}
\int^{T}_{0}\Delta Y_{s} d(\Delta K_s) \leq 0,
\end{eqnarray*}
and by using similar computation as in the proof of existence, we have
\begin{eqnarray*}
\E\left\{\left| \Delta Y_{t}\right| ^{2}+\int_{0}^{T}|\Delta
Y_{s}|dA_s+\int_{0}^{T}\| \Delta Z_{s}\| ^{2}ds \right\} &\leq
&C\E\int_{0}^{T}|\Delta Y_{s}|^2ds,
\end{eqnarray*}
from which, we deduce that $\displaystyle{\Delta Y_{t}=0}$ and
further $\displaystyle{\Delta Z_{t}=0}.$ On the other hand since
\begin{eqnarray*}
\Delta K_{t} &=&\Delta
Y_{t}-\int_{0}^{t}\left(f(s,Y_{s},Z_{s})-f(s,Y_{s}^{\prime},Z_{s}^{\prime
})\right)ds
-\int_{0}^{t}\left(\phi(s,Y_{s})-\phi(s,Y_{s}^{\prime})\right)dA_s\\
&&-\int_{0}^{t}\left(g(s,Y_{s},Z_{s})-g(s,Y_{s}^{\prime
},Z_{s}^{\prime })\right)dB_{s}+\int_{0}^{t}\Delta
Z_{s}\downarrow dW_{s},
\end{eqnarray*}
we have $\displaystyle{\Delta K_{t}=0}$. The proof is complete now.
\end{proof}
\section {Connection to stochastic viscosity solution for reflected SPDEs with nonlinear
Neumann boundary condition} \setcounter{theorem}{0}
\setcounter{equation}{0} In this section we will investigate the
reflected generalized BDSDEs studied in the previous
 section in order to give a probabilistic interpretation for the stochastic viscosity
 solution of a class of nonlinear reflected SPDEs with nonlinear Neumann boundary condition.
\subsection {Notion of stochastic viscosity solution for reflected SPDEs with nonlinear Neumann boundary condition}
With the same notations as in Section 2, let ${\bf
F}^{B}=\{\mathcal{F}_{t}^{B}\}_{0\leq t\leq T}$ be the filtartion
generated by $B$, where $B$ is a one dimensional Brownian motion. By
${\mathcal{M}}^{B}_{0,T}$ we denote all the ${\bf F}^{B}$-stopping
times $\tau$ such $0\leq \tau\leq T$, a.s.
${\mathcal{M}}^{B}_{\infty}$ is the set of all ${\bf
F}^{B}$-stopping times that are almost surely finite. For generic
Euclidean spaces $E$ and $E_{1}$ we introduce the following vector
spaces of functions:
\begin{enumerate}
\item The symbol $\mathcal{C}^{k,n}([0,T]\times
E; E_{1})$ stands for the space of all $E_{1}$-valued functions
defined on $[0,T]\times E$ which are $k$-times continuously
differentiable in $t$ and $n$-times continuously differentiable in
$x$, and $\mathcal{C}^{k,n}_{b}([0,T]\times E; E_{1})$ denotes the
subspace of $\mathcal{C}^{k,n}([0,T]\times E; E_{1})$ in which all
functions have uniformly bounded partial derivatives.
\item For any sub-$\sigma$-field $\mathcal{G} \subseteq
\mathcal{F}_{T}^{B}$, $\mathcal{C}^{k,n}(\mathcal{G},[0,T]\times E;
E_{1})$ (resp.\, $\mathcal{C}^{k,n}_{b}(\mathcal{G},[0,T]\times E;
E_{1})$) denotes the space of all $\mathcal{C}^{k,n}([0,T]\times E;
E_{1})$  (resp.\, $\mathcal{C}^{k,n}_{b}([0,T]\times E;E_{1})$-valued
random variable that are $\mathcal{G}\otimes\mathcal{B}([0,T]\times
E)$-measurable;
\item $\mathcal{C}^{k,n}({\bf F}^{B},[0,T]\times E; E_{1})$
(resp.$\mathcal{C}^{k,n}_{b}({\bf F}^{B},[0,T]\times E; E_{1})$) is
the space of all random fields $\phi\in
\mathcal{C}^{k,n}({\mathcal{F}}_{T},[0,T]\times E; E_{1}$ (resp.
$\mathcal{C}^{k,n}({\mathcal{F}}_{T},[0,T]\times E; E_{1})$, such
that for fixed $x\in E$, the mapping
$\displaystyle{(t,\omega_{1})\rightarrow \alpha(t,\omega_{1},x)}$ is
${\bf F}^{B}$-progressively measurable.
\item For any sub-$\sigma$-field $\mathcal{G} \subseteq
\mathcal{F}^{B}$ and a real number $ p\geq 0$,
$L^{p}(\mathcal{G};E)$ to be all $E$-valued $\mathcal{G}$-measurable
random variable $\xi$ such that $ \E|\xi|^{p}<\infty$.
\end{enumerate}
Furthermore, regardless their dimensions we denote by
$<.,.>$ and $|.|$ the inner product and norm in $E$ and
$E_1$, respectively. For $(t,x,y)\in[0,T]\times\R^{d}\times\R$, we
denote $D_{x}=(\frac{\partial}{\partial
x_{1}},....,\frac{\partial}{\partial x_{d}}),\,\\
D_{xx}=(\partial^{2}_{x_{i}x_{j}})_{i,j=1}^{d}$,
$D_{y}=\frac{\partial}{\partial y}, \,\
D_{t}=\frac{\partial}{\partial t}$. The meaning of $D_{xy}$ and
$D_{yy}$ is then self-explanatory.

Let $\Theta$ be an open connected bounded domain of $\R^{d}\, (d\geq
1)$. We suppose that $\Theta$ is smooth domain, which is such that
for a function $\psi\in\mathcal{C}^{2}_b(\R^{d}),\ \Theta$ and its
boundary  $\partial\Theta$ are characterized by
$\Theta=\{\psi>0\},\, \partial\Theta=\{\psi=0\}$ and, for any
$x\in\partial\Theta,\, \nabla\psi(x)$ is the unit normal vector
pointing towards the interior of $\Theta$.

In this section, we consider the continuous coefficients $f$ and
$\phi$,
\begin{eqnarray*}
f&:&\Omega_{1}\times[0,T]\times\overline{\Theta}\times\R\times\R^{d}\longrightarrow
\R\\
\phi&:&\Omega_{1}\times[0,T]\times\overline{\Theta}\times\R\longrightarrow
\R\\
\end{eqnarray*}
with the property that for all $x\in\overline{\Theta},\ f(.,x,.,.)$
and $\phi(.,x,.)$ are Lipschitz continuous in $x$ and satisfy the
conditions $({\bf H'}_1)$ and $({\bf H}_2)$, uniformly in $x$,
where, for some constant $K>0,$ the condition $({\bf H'}_1)$ is:
\begin{eqnarray*}
({\bf H'}_1)\left\{
\begin{array}{l}
|f(t,x,y,z)|\leq K(1+|x|+|y|+\|z\|),\\
|\phi(t,x,y)|\leq K(1+|x|+|y|).
\end{array}\right.
\end{eqnarray*}
Furthermore, we shall make use of the following assumptions:
\begin{description}
\item  $({\bf H}_3)$ The function $\sigma:\R^{d}\longrightarrow\R^{d\times
d}$ and $b:\R^{d}\longrightarrow\R^{d}$ are uniformly Lipschitz
continuous, with common Lipschitz constant $K>0$.
\item $({\bf H}_4)$ The functions $l:\overline{\Theta}\longrightarrow \R$ and $h:[0,T]\times\overline{\Theta}\longrightarrow \R$ are continuous such that, for some $K>0,$
\begin{eqnarray*}
|l(x)|&\leq& K(1+|x|)\\
|h(t,x)|&\leq& K(1+|x|)\\
h(0,x)&\leq& l(x),\,\,\,\,\,\,\,\,\,\,\,\,\,\,x\in\overline{\Theta}.
\end{eqnarray*}
\item $({\bf H}_5)$ The function
$g\in{\mathcal{C}}_{b}^{0,2,3}([0,T]\times\overline{\Theta}\times\R;\R)$.
\end{description}
Let us consider the related obstacle problem for SPDE with nonlinear
Neumann boundary condition:
\begin{eqnarray*}
\mathcal{OP}^{(f,\phi,g,h,l)}\left\{
\begin{array}{l}
\displaystyle \min\left\{u(t,x)-h(t,x),\; -\frac{\partial
u(t,x)}{\partial t}-[
Lu(t,x)+f(t,x,u(t,x),\sigma^{*}(x)D_{x}u(t,x))]dt\right.\\\\
\left.\,\,\,\,\,\,\,\,\,\,\,\,\,\,\,\,\,\ -
g(t,x,u(t,x))\lozenge B_{s}\right\}=0,\,\,\
(t,x)\in[0,T]\times\Theta\\\\ u(0,x)=l(x),\,\,\,\,\,\,\ x\in\overline{\Theta}\\\\
\displaystyle\frac{\partial u}{\partial
n}(t,x)+\phi(t,x,u(t,x))=0,\,\,\ (t,x)\in[0,T]\times\partial\Theta,
\end{array}\right.
\end{eqnarray*}
where
\begin{eqnarray*}
L=\frac{1}{2}\sum_{i,j=1}^{d}(\sigma(x)\sigma^{*}(x))_{i,j}
\frac{\partial^{2}}{\partial x_{i}\partial
x_{j}}+\sum^{d}_{i=1}b_{i}(x)\frac{\partial}{\partial x_{i}},\quad
\forall\, x\in\Theta,
\end{eqnarray*}
and
\begin{eqnarray*}
\frac{\partial}{\partial n}=\sum_{i=1}^{d}\frac{\partial
\psi}{\partial x_{i}}(x)\frac{\partial}{\partial x_{i}},\quad
\forall\, x\in\partial\Theta.
\end{eqnarray*}
As in the work of Buckdahn-Ma \cite{BM1,BM2}, our next goal is
to define the notion of stochastic viscosity to
$\mathcal{OP}^{(f,\phi,g,h)}$. So, we shall recall some of
their notation. Let $\eta\in\mathcal{C}({\bf
F}^{B},[0,T]\times\R^{d}\times\R)$ be the solution to the equation
\begin{eqnarray*}
\eta(t,x,y)&=&y+\int^{t}_{0}\langle g(s,x,\eta(s,x,y)),
\circ{dB}_s\rangle,
\end{eqnarray*}
where the stochastic integrals have to be interpreted in Stratonowich sense. We have the following relation with the standard It\^{o} integral:
\begin{eqnarray*}
\int^{t}_{0}\langle g(s,x,\eta(s,x,y)),
\circ{dB}_s\rangle&=&\frac{1}{2}\int^{t}_{0}\langle g,D_yg\rangle(s,x,\eta(s,x,y)ds
+\int^{t}_{0}\langle g(s,x,\eta(s,x,y)),dB_s\rangle.\label{c'1}
\end{eqnarray*}
Under the assumption $({\bf H}_5)$ the mapping $y\mapsto\eta(s,x,y)$ defines a diffeomorphism
for all $t,x, \, a.s$. Hence if we denote by $\varepsilon(s,x,y)$ its
$y$-inverse, one can show that (cf. Buckdahn and Ma \cite{BM1})
\begin{eqnarray}
\varepsilon(t,x,y)=y-\int^{t}_{0}\langle D_{y}\varepsilon(s,x,y)
g(s,x,y),\circ {dB}_{s}\rangle. \label{c2}
\end{eqnarray}
To simplify the notation in the sequel we denote
\begin{eqnarray*}
&&A_{f,g}(\varphi(t,x))=
L\varphi(t,x)+f(t,x,\varphi(t,x),\sigma^{*}D_{x}\varphi(t,x))-\frac{1}{2}
(g,D_{y}g) (t,x,\varphi(t,x))\\
&&\mbox{and}\, \Psi(t,x)=\eta(t,x,\varphi(t,x)).
\end{eqnarray*}
\begin{definition}\label{D:defvisco}
A random field $u \in \mathcal{C}\left(\mathbf{F}^B, [0,T]\times
\overline{\Theta}\right)$ is called a stochastic viscosity
subsolution of the stochastic obstacle problem
$\mathcal{OP}^{(f,\phi,g,h,l)}$ if $u\left(0,x\right)\leq
l\left(x\right)$, for all $x\in \overline{\Theta}$, and if for any
stopping time $\tau \in \mathcal{M}_{0,T}^B$, any state variable
$\xi\in L^0\left(\mathcal{F}_{\tau}^B, \Theta\right)$, and any
random field $\varphi\in
\mathcal{C}^{1,2}\left(\mathcal{F}_{\tau}^B,\ [0,T]\times
\mathbb{R}^d\right)$, with the property that for $\mathbb{P}$-almost
all $\omega\in\left\{0<\tau<T\right\}$ the inequality
$$u\left(t,\omega,x\right)-\Psi\left(t,\omega,x\right)
 \leq 0=u\left(\tau(\omega),\xi(\omega)\right)-\Psi
\left(\tau(\omega),\xi(\omega)\right)$$ is fulfilled for all
$\left(t,x\right)$ in some neighborhood
$\mathcal{V}\left(\omega,\tau\left(\omega\right),\xi\left(\omega\right)\right)$
of $\left(\tau\left(\omega\right),\xi\left(\omega\right)\right)$,
the following conditions are satisfied:
\begin{itemize}
\item[(a)] on the event $\left\{0<\tau<T\right\}\cap\left\{\xi\in
\Theta\right\}$ the inequality
\begin{equation}\label{E:def1}
\min\left\{u(\tau,\xi)-h(\tau,\xi),\mathrm{A}_{f,
g}\left(\Psi\left(\tau,\xi\right)\right)- D_y\Psi
\left(\tau,\xi\right)D_t \varphi\left(\tau,\xi\right)\right\}\leq 0
\end{equation}
holds, $\mathbb{P}$-almost surely;
\item[(b)] on the event $\left\{0<\tau<T\right\}\cap\left\{\xi\in
\partial \Theta\right\}$ the inequality
\begin{align} \min & \left[\min\left\{u(\tau,\xi)-h(\tau,\xi),\mathrm{A}_{f,
g}\left(\Psi\left(\tau,\xi\right)\right)- D_y \Psi
\left(\tau,\xi\right) D_t
\varphi\left(\tau,\xi\right)\right\}\,,\right.\nonumber\\
& \left.-\frac{\displaystyle{\partial \Psi}}{\displaystyle{\partial
n}}\left(\tau,\xi\right)-\phi\left(\tau,\xi,\Psi\left(\tau,\xi\right)\right)
\right] \leq 0 \label{E:viscosity01}
\end{align}
holds, $\mathbb{P}$-almost surely.
\end{itemize}
A random field $u \in \mathcal{C}\left(\mathbf{F}^B, [0,T]\times
\overline{\Theta}\right)$ is called a stochastic viscosity
supersolution of the stochastic obstacle problem
$\mathcal{OP}^{(f,\phi,g,h,l)}$ if $u\left(0,x\right)\geq
l\left(x\right)$, for all $x\in \overline{\Theta}$, and if for any
stopping time $\tau\in\mathcal{M}_{0,T}^B$, any state variable
$\xi\in L^0\left(\mathcal{F}_{\tau}^B,\Theta\right)$, and any random
field $\varphi\in \mathcal{C}^{1,2}\left(\mathcal{F}_{\tau}^B,\
[0,T]\times \mathbb{R}^d\right)$, with the property that for
$\mathbb{P}$-almost all $\omega\in\left\{0<\tau<T\right\}$ the
inequality
$$u\left(t,\omega,x\right)-\Psi \left(t,\omega,x\right)
 \geq 0=u\left(\tau(\omega),\xi(\omega)\right)-\Psi
\left(\tau(\omega),\xi(\omega)\right)$$ is fulfilled for all
$\left(t,x\right)$ in some neighborhood
$\mathcal{V}\left(\omega,\tau\left(\omega\right),\xi\left(\omega\right)\right)$
of $\left(\tau\left(\omega\right),\xi\left(\omega\right)\right)$,
the following conditions are satisfied:
\begin{itemize}
\item[(a)] on the event $\left\{0<\tau<T\right\}\cap\left\{\xi\in
\Theta\right\}$ the inequality
\begin{equation}\label{E:def12}
\min\left\{u(\tau,\xi)-h(\tau,\xi),\mathrm{A}_{f,
g}\left(\Psi\left(\tau,\xi\right)\right) - D_y\Psi
\left(\tau,\xi\right)D_t \varphi\left(\tau,\xi\right)\right\}\geq 0
\end{equation}
holds, $\mathbb{P}$-almost surely;
\item[(b)] on the event $\left\{0<\tau<T\right\}\cap\left\{\xi\in
\partial \Theta\right\}$ the inequality
\begin{align} \max & \left[\min\left\{u(\tau,\xi)-h(\tau,\xi),\mathrm{A}_{f,
g}\left(\Psi\left(\tau,\xi\right)\right)- D_y \Psi
\left(\tau,\xi\right) D_t
\varphi\left(\tau,\xi\right)\right\}\,,\right.\nonumber\\
& \left.-\frac{\displaystyle{\partial \Psi}}{\displaystyle{\partial
n}}\left(\tau,\xi\right)-\phi\left(\tau,\xi,\Psi\left(\tau,\xi\right)\right)
\right] \geq 0 \label{E:viscosity02}
\end{align}
 holds, $\mathbb{P}$-almost surely.
 \end{itemize}

Finally, a random field $u \in \mathcal{C}\left(\mathbf{F}^B,
[0,T]\times \overline{\Theta}\right)$ is called a stochastic
viscosity solution of the stochastic obstacle problem
$\mathcal{OP}^{(f,\phi,g,h,l)}$ if it is both a stochastic
viscosity subsolution and a supersolution.
\end{definition}
\begin{remark}
Observe that if $f,\,\phi$ are deterministic and $g\equiv 0$, the flow $\eta$ becomes $\eta(t,x,y)=y,\, \forall\ (t,x,y)$ and $\Psi(t,x)=\varphi(t,x)$. Thus,
definition\, $\ref{D:defvisco}$ coincides with the definition of
(deterministic) viscosity solution of PDE
$\mathcal{OP}^{(f,\phi,0,h,l)}$ given by Ren et al in \cite{Ral}.
\end{remark}
\subsection {Existence of stochastic viscosity solutions for SPDE
with nonlinear Neumann boundary condition} The main objective of
this subsection is to show how the stochastic obstacle problem
$\mathcal{OP}^{(f,\phi,g,h,l)}$ is related to reflected generalized
BDSDE $(\ref{a1})$ introduced in Section 1. For this end we recall
some known results on reflected diffusions. We consider
\allowdisplaybreaks\begin{align}
s\mapsto &A_{s}^{t,x} \,\,\,\hbox{is increasing}\nonumber\\
X_s^{t,x} &= x+\int_s^t b\left(X_r^{t,x}\right) dr+\int_s^t
\sigma\left(X_r^{t,x}\right) d\downarrow{W}_r+\int_s^t \nabla \psi
\left(X_r^{t,x}\right) dA_r^{t,x}, \quad
\forall\, s\in [0,t]\,,\nonumber\\
A^{t,x}_{s}&=\int_{s}^{t} I_{\left\{ X^{t,x}_{r}\in\partial\Theta
\right\}}\, dA^{t,x}_{r}.\label{rSDE}
\end{align}
We note here that due to the direction of the It\^{o} integral,
$(\ref{rSDE})$ should be viewed as going from $t$ to $0$ (i.e.,
$X^{t,x}_0$ should be understood as the terminal value of the
solution $X^{t,x}$ ). It is then clear (see \cite{LZ}) that under
conditions $({\bf H}_3)$ on the coefficients $b$ and $\sigma$,
$(\ref{rSDE})$ has a unique strong ${\bf F}^{W}$-adapted solution.
We refer to Pardoux and Zhang \cite{PZ}( Propositions 3.1 and 3.2),
and S{\l}omi\`nski \cite{Sl},for the following regularity results.
\begin{proposition}\label{P:continuity00}
There exists a constant $ C>0 $ such that for all for all $t\leq
t_1<t_2\leq T$ and $x_1,x_2\in \overline{\Theta}$,\  the following
inequalities hold:
$$\mathbb{E}\left[\sup_{t_2\leq s\leq
T}\left|X^{t_1,x_1}_{s}-X^{t_2,x_2}_{s}\right|^4\right] \leq
C\left\{ |t_2-t_1|^2+|x_1-x_2|^4.\right\}$$
and$$\mathbb{E}\left[\sup_{t_2\leq s\leq
T}\left|A_s^{t_1,x_1}-A_{s}^{t_2,x_2}\right|^4\right]\leq C\left\{
|t_2-t_1|^2+|x_1-x_2|^4.\right\}.$$ Moreover, for all $p\geq 1$,
there exists a constant $C_p$ such that for all
$(t,x)\in\mathbb{R}_+ \times \overline{\Theta}$,
\[\mathbb{E}\left(\left|A_s^{t,x}\right|^p \right) \leq C_p(1+t^p)\] and
for each $\mu$, $0<s<t$, there exists a constant $C(\mu,t)$ such
that for all $x\in\overline{\Theta}$,
$$\mathbb{E}\left(\displaystyle e^{\mu A_{s}^{t,x}}\right) \leq C(\mu,t).$$
\end{proposition}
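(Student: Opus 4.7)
The plan is to establish the three claims separately --- moment bounds on the local time, joint continuity of $(X^{t,x},A^{t,x})$ in the parameters, and exponential moments of $A^{t,x}$ --- by reducing each to a classical estimate for a forward reflected SDE via the time-reversal $\widetilde{W}_r := W_t - W_{t-r}$, after which $(\ref{rSDE})$ reads as a standard Skorokhod problem driven by a Brownian motion adapted to an appropriate filtration. All three conclusions are then instances of Pardoux--Zhang / S{\l}omi\'nski type estimates, but I sketch how I would re-derive them.

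For the moment bound on $A^{t,x}$, I would apply It\^o's formula to $\psi(X^{t,x}_s)$. Since $\psi \in \mathcal{C}^2_b(\R^d)$ with $\psi = 0$ and $\nabla\psi$ the inward unit normal on $\partial\Theta$, and $dA^{t,x}$ is carried by $\{X^{t,x}\in\partial\Theta\}$, the reflection contribution becomes $\int |\nabla\psi(X)|^2\,dA = \int dA$. Rearranging isolates $A^{t,x}_s$ as a sum of bounded drift and martingale terms involving $b$, $\sigma$, $D\psi$, $D^2\psi$ evaluated along $X^{t,x}$. Taking $p$-th powers and using BDG followed by Gronwall yields $\mathbb{E}|A^{t,x}_s|^p \le C_p(1+t^p)$.

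For the continuity estimate, I would apply It\^o's formula to $|X^{t_1,x_1}_s - X^{t_2,x_2}_s|^4$ on the subinterval where both processes are defined, after first matching initial conditions by the standard increment bound $\mathbb{E}|X^{t_1,x_1}_{t_2} - x_1|^4 \le C|t_2-t_1|^2$. The delicate cross-term
\[
\bigl\langle X^{t_1,x_1}-X^{t_2,x_2},\,\nabla\psi(X^{t_1,x_1})\,dA^{t_1,x_1}-\nabla\psi(X^{t_2,x_2})\,dA^{t_2,x_2}\bigr\rangle
\]
is handled using the semiconvexity inequality $\langle y-x,\nabla\psi(x)\rangle \ge -\kappa|y-x|^2$ valid for $x\in\partial\Theta$, $y\in\overline{\Theta}$, which is a consequence of $\psi\in\mathcal{C}^2_b$ together with $\psi(x)=0,\ \psi(y)\ge 0$. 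Combined with the Lipschitz continuity of $b,\sigma$ and BDG, Gronwall then gives the fourth-moment bound for $X$. The bound for $A$ follows by isolating the reflection term in $(\ref{rSDE})$ and reusing the $X$-estimate together with $|\nabla\psi|=1$ on $\partial\Theta$.

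For the exponential moment, I would first upgrade Step~1 to the factorial bound $\mathbb{E}(A^{t,x}_s)^k \le C^k k!(1+t^k)$, obtained by an iterated integration by parts $\mathbb{E}(A^{t,x}_s)^k = k\mathbb{E}\int (A^{t,x}_r)^{k-1}\,dA^{t,x}_r$ combined with the Step~1 estimate in Khasminski's lemma form. Summing the Taylor series $\mathbb{E}e^{\mu A^{t,x}_s} = \sum_{k}\mu^k\mathbb{E}(A^{t,x}_s)^k/k!$ yields the bound $C(\mu,t)$ first for small $\mu$, and for arbitrary $\mu$ by iterating over sufficiently short sub-intervals of $[0,t]$.

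The main obstacle is the cross-term in Step~2: without the regularity $\psi\in\mathcal{C}^2_b$ (or outright convexity of $\Theta$), this term is essentially uncontrolled, since the two reflection measures are mutually singular and supported on different random sets. The semiconvexity estimate above is precisely what makes the Gronwall closure work, and it is the only place where the $\mathcal{C}^2_b$ smoothness of $\partial\Theta$ really enters the argument.
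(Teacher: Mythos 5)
First, a point of reference: the paper does not prove this proposition at all; it simply quotes it from Pardoux--Zhang \cite{PZ} (Propositions 3.1 and 3.2) and S{\l}omi\'nski \cite{Sl}. So your sketch is really being measured against those classical arguments, and in outline it follows them: time reversal to a forward Skorokhod problem, It\^o's formula applied to $\psi(X^{t,x})$ together with $|\nabla\psi|=1$ on $\partial\Theta$ for the $p$-th moments of $A^{t,x}$, the semiconvexity inequality $\langle y-x,\nabla\psi(x)\rangle\ge-\kappa|y-x|^2$ (from $\psi\in\mathcal{C}^2_b$, $\psi(x)=0$, $\psi(y)\ge0$) for the reflection cross-term, and a Khasminskii-type iteration for the exponential moment. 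All of that is sound.

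There is, however, a genuine gap in your Step 2. The semiconvexity inequality converts the cross-term into contributions of the form $\int_0^s\left|X^{t_1,x_1}_r-X^{t_2,x_2}_r\right|^4\,d\left(A^{t_1,x_1}_r+A^{t_2,x_2}_r\right)$, and these are \emph{not} absorbed by the Gronwall argument you invoke: after taking expectations the integrator is the random, non-absolutely-continuous measure $d(A^1+A^2)$ rather than $dr$, so the ordinary Gronwall lemma does not apply, and a pathwise Gronwall with respect to $dr+dA^1_r+dA^2_r$ leaves a factor $e^{\kappa(A^1_s+A^2_s)}$ inside the expectation that you have not controlled at that stage. The standard repair (this is exactly what Lions--Sznitman and Pardoux--Zhang do) is to apply It\^o's formula to $e^{-\kappa(A^1_s+A^2_s)}\left|X^1_s-X^2_s\right|^4$, so that the term $-\kappa\left|X^1-X^2\right|^4\,d(A^1+A^2)$ produced by the weight dominates the semiconvexity terms, and then to remove the weight by Cauchy--Schwarz using the exponential integrability $\mathbb{E}\,e^{p\kappa A^{t,x}_s}<\infty$. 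This forces a reordering of your argument: the exponential-moment estimate (your Step 3, which depends only on Step 1, so there is no circularity) must be proved \emph{before} the continuity estimate, and your write-up neither does this nor connects the exponential moments to Step 2 --- as stated, "Lipschitz $+$ BDG $+$ Gronwall" does not close. The same weighting or a Cauchy--Schwarz against moments of $A$ is also needed when you "isolate the reflection term" to transfer the fourth-moment estimate from $X$ to $A$, since comparing $\int\nabla\psi(X^1)\,dA^1$ with $\int\nabla\psi(X^2)\,dA^2$ again produces integrals of $|X^1-X^2|$ against $dA^2$.
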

Now, we consider the following reflected
generalized BDSDE: for $(t,x)\in[0,T]\times\overline{\Theta}$
\begin{align}\label{E:back}
\left\{
\begin{aligned}
&Y_s^{t,x}  =   l\left(X^{t,x}_0\right)+\int_{0}^s
f\left(r,X^{t,x}_r ,Y_r^{t,x},Z_r^{t,x}\right) dr+\int_{0}^s
g\left(r,X^{t,x}_r ,Y_r^{t,x}\right) dB_r
\\
& \qquad\qquad +\int_{0}^s \phi\left(r,X_r^{t,x}, Y_r^{t,x}\right)
dA_r^{t,x}+K_s^{t,x}-\int_{0}^s\left<Z_r^{t,x},\downarrow{dW_r}\right>,\\
& Y^{t,x}_s\geq h(s,X_s^{t,x})\, \mbox{such that}\ \displaystyle\int_0^T\left(Y^{t,x}_r-h(r,X_r^{t,x})\right)dK^{t,x}_r=0,\;\;\; 0\leq s\leq t.
\end{aligned}
\right.
\end{align}
where the coefficients $l$, $f$, $g$, $\phi$ and $h$ satisfy the
hypotheses $({\bf H'}_{1}),\, ({\bf H}_2), ({\bf H}_4)$ and $({\bf H}_5)$.
\begin{proposition}
Let the ordered triplet $(Y^{t,x}_s,Z^{t,x}_s,K^{t,x}_s) $ be a
solution of the BDSDE $(\ref{E:back})$. Then the random field
$(s, t, x)\mapsto Y^{t,x}_s , \, (s, t, x)\in[0,T ]\times[0,T
]\times \Theta$ is almost surely continuous.
\end{proposition}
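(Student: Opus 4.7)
The plan is to apply Kolmogorov's continuity criterion to the three-parameter random field $(s,t,x)\mapsto Y^{t,x}_s$. Pathwise continuity in $s$ for fixed $(t,x)$ is already built into Definition of a solution (property $(ii)$), so it suffices to establish a joint moment estimate of the form
$$\E\Bigl[\sup_{0\le s\le t_1\wedge t_2}\bigl|Y^{t_1,x_1}_s-Y^{t_2,x_2}_s\bigr|^p\Bigr]\le C\bigl(|t_1-t_2|^{\alpha}+|x_1-x_2|^{\alpha}\bigr)$$
for some $p\ge 4$ and $\alpha>0$, and combine it with the pathwise continuity in $s$ to deduce joint almost sure continuity.

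First I would apply It\^o's formula to $e^{\mu(A^{t_1,x_1}_s+A^{t_2,x_2}_s)}|\Delta Y_s|^2$, where $\Delta Y:=Y^{t_1,x_1}-Y^{t_2,x_2}$ and similarly $\Delta Z$, $\Delta K$, on the common interval $[0,t_1\wedge t_2]$. The remaining short piece $[t_1\wedge t_2,t_1\vee t_2]$ is absorbed using the a~priori estimate of Lemma~\ref{lem:1} and the Lipschitz/linear-growth assumptions $(\mathbf{H}'_1),(\mathbf{H}_2)$. The Lipschitz assumption $(\mathbf{H}_2)(b)$ applied to $f,g,\phi$, combined with the continuity estimates for $X^{t,x}$ and $A^{t,x}$ from Proposition~\ref{P:continuity00} and the continuity of the coefficients in $x$, reduce every drift and martingale contribution to a sum of terms bounded by $|t_1-t_2|^{1/2}+|x_1-x_2|$ up to an integral of $|\Delta Y|^2\,ds$ and $|\Delta Y|^2\,dA$ which will be absorbed by Gronwall.

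The main obstacle is the reflection cross-term $\int_0^s\Delta Y_r\,d(\Delta K_r)$, which has no favourable sign on its own. Using the Skorohod flat-off condition $\int_0^T(Y^{t_i,x_i}_r-h(r,X^{t_i,x_i}_r))\,dK^{t_i,x_i}_r=0$ for $i=1,2$, one rewrites
$$\int_0^s\!\Delta Y_r\,d(\Delta K_r)\le \int_0^s\!\bigl[h(r,X^{t_1,x_1}_r)-h(r,X^{t_2,x_2}_r)\bigr]\,d\bigl(K^{t_1,x_1}_r+K^{t_2,x_2}_r\bigr),$$
and then applies H\"older's inequality together with the uniform $L^2$-bound on $K^{t,x}_T$ from Lemma~\ref{lem:1} and the continuity of $h$ evaluated along the reflected diffusions from Proposition~\ref{P:continuity00}. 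This is the step where the hypothesis $(\mathbf{H}_4)$ on $h$ and the $L^p$ bounds on $A^{t,x}$ are essential.

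Finally, combining the resulting estimate with the Burkholder--Davis--Gundy inequality (to pass from $\E|\Delta Y_s|^2$ to $\E\sup_s|\Delta Y_s|^p$) and Gronwall's lemma delivers the Kolmogorov-type bound above, with exponent that can be boosted to $p\geq 4$ by iterating the argument. Kolmogorov's continuity criterion then yields an a.s.\ continuous modification of $(s,t,x)\mapsto Y^{t,x}_s$ on $[0,T]\times[0,T]\times\overline{\Theta}$, proving the proposition.
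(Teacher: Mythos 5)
Your overall strategy differs from the paper's in a way that matters: you aim for a quantitative Kolmogorov-type moment bound
$\E\bigl[\sup_s|Y^{t_1,x_1}_s-Y^{t_2,x_2}_s|^p\bigr]\le C(|t_1-t_2|^{\alpha}+|x_1-x_2|^{\alpha})$,
whereas the paper never produces (and cannot produce) a modulus of continuity. The obstruction is hypothesis $({\bf H}_4)$: the terminal function $l$ and the obstacle $h$ are only assumed \emph{continuous} with linear growth, not Lipschitz or H\"older. After It\^o's formula the estimate necessarily contains the terms $|l(X^{t_1,x_1}_0)-l(X^{t_2,x_2}_0)|^2$ and $h(r,X^{t_1,x_1}_r)-h(r,X^{t_2,x_2}_r)$ (the latter exactly through your reflection cross-term trick, which is the same El Karoui-type bound the paper uses, up to replacing $d(K^{1}+K^{2})$ by the signed measure $d\Delta K$), and under $({\bf H}_4)$ these admit no bound by a power of $|t_1-t_2|+|x_1-x_2|$. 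So the key inequality your Kolmogorov argument rests on is not available from the stated assumptions. The paper's proof is instead qualitative: it derives a conditional-expectation estimate for $|Y^{t,x}_s-Y^{t',x'}_s|^2$ in which the $l$-, $h$-, $X$- and $A$-differences are kept as they are, and then lets $(t,x)\to(t',x')$, using Proposition \ref{P:continuity00} (a.s. convergence of $X^{t,x}$ and $A^{t,x}$) together with the mere continuity of $l$ and $h$ and dominated convergence; no rate is ever needed, and the a priori pathwise continuity in $s$ supplies joint continuity.

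Two secondary points, even if you were willing to strengthen $({\bf H}_4)$ to Lipschitz data: the H\"older step you apply to the reflection term (pairing $\sup_r|h(r,X^{1}_r)-h(r,X^{2}_r)|$ with the $L^2$-bound on $K_T$ from Lemma \ref{lem:1}) halves the exponent, and Kolmogorov's criterion for the $(s,t,x)$-field on a subset of $\R^{d+2}$ requires a moment exponent whose excess over $d+2$ is positive, so ``$p\ge 4$ by iterating'' is not sufficient for $d\ge 3$; the exponent bookkeeping would have to be done with $p$ depending on $d$. As it stands, the proposal's central estimate cannot be established under the paper's hypotheses, so the argument has a genuine gap rather than being an alternative proof.
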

\begin{proof}
If we denote by $\mathbb{E}^{\mathcal{F}_s}$ the conditional
expectation with respect to $\mathcal{F}_s$, then we can  show that
there exists a constant $ C>0 $ such that for all $(t,x)$,
$(t',x')\in [0,T]\times\overline{\Theta}$ the following inequality
holds\newpage
\begin{eqnarray*}
&&\left\vert
Y^{t,x}_{s}-Y^{t',x'}_{s}\right\vert^2\\
&& \leq C\mathbb{E}^{\mathcal{F}_s} \left[e^{\mu
k_T}\left|l(X^{t,x}_{0}) -l(X^{t',x'}_{0})\right|^2+ \int_0^T e^{\mu
k_r} \left|f\left(r,X^{t,x}_r,Y^{t,x}_r,Z^{t,x}_r\right)
-f\left(r,X^{t',x'}_r,Y^{t',x'}_r,Z^{t',x'}_r\right)\right|^2dr\right.\\
&&+\int_0^Te^{\mu
k_r}\left|\phi\left(r,X^{t,x}_r,Y^{t,x}_r\right)\right|^2
d|\overline{A}|_r+ \int_0^Te^{\mu
k_r}\left|\phi\left(r,X^{t,x}_r,Y^{t,x}_r\right)
-\phi\left(r,X^{t',x'}_r,Y^{t',x'}_r\right)\right|^2dA_r^{t',x'}\\
&&\left. +\int_0^T e^{\mu k_r}
\left(h(r,X^{t,x}_{r})-h(r,X^{t',x'}_{r})\right)d\Delta K_r\right],
\end{eqnarray*}
where  $\Delta K:=K^{t,x}-K^{t',x'}$, $\overline{A}=
A^{t,x}-A^{t',x'}$\ and\
$k\triangleq\left|\overline{A}\right|+A^{t',x'}$ where
$\left|\overline{A}\right|$ is the total variation of the process
$\overline{A}$. Using the assumptions $({\bf H'}_{1})$\, and $({\bf
H}_2)$, we get
\begin{eqnarray*}
&&\left\vert
Y^{t,x}_{s}-Y^{t',x'}_{s}\right\vert^2\\
&& \leq C\mathbb{E}^{\mathcal{F}_s} \left[e^{\mu
k_T}\left|l(X^{t,x}_{0}) -l(X^{t',x'}_{0})\right|^2+ \int_0^T e^{\mu
k_r} \left|X^{t,x}_{r}-X^{t',x'}_{r}\right|^2dr\right.\\
&&+ \int_0^Te^{\mu
k_r}\left|X^{t,x}_{r}-X^{t',x'}_{r}\right|^2dA_r^{t',x'}+\int_0^T
e^{\mu k_r} \left(h(r,X^{t,x}_{r})-h(r,X^{t',x'}_{r})\right)d\Delta
K_r\\
&&\left.+\sup_{0\leq s\leq T}e^{\mu
k_T}\left(1+\left|X^{t,x}_s\right|^2+\left|Y^{t,x}_s\right|^2\right)
\left|A^{t,x}-A^{t',x'}\right|_T \right].
\end{eqnarray*}
It follows using Proposition \ref{P:continuity00} that $\left|A^{t,x}-A^{t',x'}\right|_T\rightarrow 0\;\; \P$-a.s., and $\forall\, s\in[0,t],\, \left|X^{t,x}_s-X^{t',x'}_s\right|^{2}\rightarrow 0\;\; \P$-a.s. as $(t,x)\rightarrow (t',x')$. Thus, the continuity  follows from the continuity of the functions $l$ and
$h$.
\end{proof}

Let now define
\begin{eqnarray}
u(t,x)=Y^{t,x}_{t},\,\,\,\ (t,x)\in[0,T]\times\overline{\Theta}.
\label{c3}
\end{eqnarray}
\begin{theorem}
$u\in C({\bf F}^{B},[0,T]\times\overline{\Theta})$ is a stochastic viscosity
solution of obstacle problem $\mathcal{OP}^{(f,\phi,g,h,l)}$.
\end{theorem}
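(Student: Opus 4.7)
The plan is to extend the Doss-Sussman reduction of Buckdahn-Ma and Boufoussi et al.\ to the reflected setting, and then invoke the deterministic obstacle theory of Ren et al.~\cite{Ral} pathwise in $\omega_{1}$. I would proceed in three steps: (i) convert the reflected generalized BDSDE (\ref{E:back}) into a reflected generalized BSDE without the $dB$-integral via the flow $\eta$; (ii) apply the representation theorem of \cite{Ral} to the transformed equation for $\P_{1}$-a.e.\ fixed $\omega_{1}$; (iii) translate the pathwise viscosity inequalities back into the stochastic ones of Definition~\ref{D:defvisco} through the change of variables $\Psi = \eta(\cdot,\cdot,\varphi)$.

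For step (i), introduce $\widetilde{Y}_{s}^{t,x} = \varepsilon(s,X_{s}^{t,x},Y_{s}^{t,x})$ together with an associated $\widetilde{Z}_{s}^{t,x}$. A generalized It\^{o} formula, combined with (\ref{c2}) and the identity $\eta(s,x,\varepsilon(s,x,y)) = y$, eliminates the $dB$-integral and produces a reflected generalized BSDE whose coefficients $\widetilde{f},\widetilde{\phi},\widetilde{l}$ are computed explicitly from $f,\phi,l,\eta,\varepsilon$, whose obstacle is $\widetilde{h}(s,x) := \varepsilon(s,x,h(s,x))$, and whose reflection process $\widetilde{K}^{t,x}$ satisfies the Skorokhod condition against $\widetilde{Y}-\widetilde{h}(\cdot,X)$ because $y\mapsto\varepsilon(s,x,y)$ is strictly increasing. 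Under $({\bf H}_{2})$ and $({\bf H}_{5})$ the transformed coefficients remain Lipschitz in $(y,z)$, and the integrability estimates of Section~2 are preserved.

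For step (ii), fix $\omega_{1}$ outside a suitable $\P_{1}$-null set. The transformed equation is a reflected generalized BSDE driven by the single Brownian motion $W$, whose coefficients (for that $\omega_{1}$) are deterministic and satisfy the hypotheses of \cite{Ral}; their representation theorem then gives that $\widetilde{u}(t,x) := \widetilde{Y}_{t}^{t,x} = \varepsilon(t,x,u(t,x))$ is a pathwise viscosity solution of the obstacle problem with data $(\widetilde{f},\widetilde{\phi},\widetilde{h},\widetilde{l})$ and Neumann operator $\partial/\partial n$. For step (iii), take any $\tau,\xi,\varphi$ as in Definition~\ref{D:defvisco} and set $\Psi = \eta(\cdot,\cdot,\varphi)$. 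Because $y\mapsto\eta(s,x,y)$ is a smooth strictly increasing diffeomorphism, $u-\Psi$ has a local maximum (resp.\ minimum) at $(\tau(\omega),\xi(\omega))$ iff $\widetilde{u}-\varphi$ does. A direct chain-rule computation through $\eta$ shows that the expression $-D_{t}\varphi - L\varphi - \widetilde{f}(\cdot,\varphi,\sigma^{*}D_{x}\varphi)$, after multiplication by the positive factor $D_{y}\eta(\cdot,\cdot,\varphi)$, is exactly $A_{f,g}(\Psi) - D_{y}\Psi\, D_{t}\varphi$, and similarly the transformed Neumann term becomes $-\partial\Psi/\partial n - \phi(\cdot,\cdot,\Psi)$. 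Since $u\geq h$ iff $\widetilde{u}\geq\widetilde{h}$, the obstacle parts also match, so the pathwise sub/super-solution inequalities for $\widetilde{u}$ translate into (\ref{E:def1})-(\ref{E:viscosity01}) and their $\geq$ counterparts.

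The main difficulty I anticipate is the boundary analysis: verifying the chain-rule identities that intertwine the stochastic Neumann condition with the pathwise one in the presence of the reflection direction $\nabla\psi$, and ensuring that the pathwise application of \cite{Ral} yields a genuinely $\mathbf{F}^{B}$-adapted viscosity solution rather than merely an $\omega_{1}$-wise object. A subsidiary step is to check the hypotheses of \cite{Ral} uniformly in $\omega_{1}$, which should follow from the regularity of $\eta$ afforded by $({\bf H}_{5})$ together with the a priori estimates of Lemma~\ref{lem:1}.
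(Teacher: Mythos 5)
Your route is genuinely different from the paper's: the paper never transforms the reflected equation. It penalizes the obstacle, $f_{n}(t,x,y,z)=f(t,x,y,z)+n(y-h(t,x))^{-}$, cites Boufoussi et al.\ \cite{Bal} to get that $u_{n}(t,x)={}^{n}Y^{t,x}_{t}$ is a stochastic viscosity solution of the non-reflected penalized SPDE, uses the monotonicity $u_{n}\uparrow u$ from Section 2 together with Dini's theorem to get uniform convergence on compacts, and then passes to the limit in the viscosity inequalities via the approximation lemma (Lemma 6.1 in \cite{CL}), producing points $(\tilde{\tau}_{k},\tilde{\xi}_{k})$ and test fields $\varphi_{k}$ at which $u_{n_{k}}$ is touched; the standing assumption $u(\tau,\xi)>h(\tau,\xi)$ makes the penalization term vanish for large $k$, so the limit inequalities are exactly those of Definition \ref{D:defvisco}. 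Your Doss--Sussman/pathwise plan is a conceivable alternative, but as written it leaves the hard analytic content unproved.

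Concretely, step (ii) is the crux and is only asserted. Applying \cite{Ral} ``for $\P_{1}$-a.e.\ fixed $\omega_{1}$'' requires (a) a freezing/identification argument showing that the triple obtained by transforming $(Y^{t,x},Z^{t,x},K^{t,x})$ through $\varepsilon$ coincides, for a.e.\ $\omega_{1}$, with the solution that the deterministic theory produces for the frozen data (this is the delicate measurability work carried out with great care in \cite{BM2} for the non-reflected case, and it is not automatic here because the reflected equation carries two singular terms, $dA^{t,x}_{r}$ and $dK^{t,x}_{r}$, through the generalized It\^{o}--Ventzell computation); and (b) verification that the frozen data actually satisfy the hypotheses of \cite{Ral}. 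Point (b) is doubtful as claimed: the transformed generator and boundary coefficient involve $D_{y}\eta$, $D_{x}\eta$, $D_{xx}\eta$ along $X^{t,x}$, so their Lipschitz and growth constants are random (depending on $\omega_{1}$ through $B$), and, more seriously, the transformed Neumann coefficient, of the form $(D_{y}\eta)^{-1}\bigl[\phi(t,x,\eta)+\partial\eta/\partial n\bigr]$, has no reason to inherit the one-sided monotonicity condition $({\bf H}_2)(b)(iii)$ (the analogue of which is needed in \cite{Ral}), nor is the transformed obstacle $\widetilde h(s,x)=\varepsilon(s,x,h(s,x))$ covered by their assumptions without further argument. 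The Skorokhod condition for $\widetilde K$ (increasing, flat off $\{\widetilde Y=\widetilde h(\cdot,X)\}$, with $d\widetilde K$ equal to $dK$ scaled by the random positive factor $(D_{y}\eta)^{-1}$) also has to be proved, not just noted. Until these points are settled, the proposal does not yield the theorem; the paper's penalization argument avoids all of them by leaning on the already-established non-reflected representation of \cite{Bal} and a purely local limiting argument.
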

\begin{proof}
For each $(t,x)\in[0,T]\times\overline{\Theta}, n\geq 1$, let
$\{^{n}Y^{t,x}_{s},{}^{n}Z^{t,x}_{s},\,\ 0\leq s\leq t\}$ denote the
solution of the generalized BDSDE
\begin{eqnarray*}
^{n}Y^{t,x}_{s}&=&l(X_{0}^{t,x})+\int^{s}_{0}f(r,X_{r}^{t,x},{}^{n}Y^{t,x}_{r},
{}^{n}Z^{t,x}_{r})dr+n\int^{s}_{0}({}^{n}Y^{t,x}_{r}-h(r,X^{t,x}_{r}))^{-}dr\nonumber\\
&&+\int^{s}_{0}\phi(r,X_{r}^{t,x},^{n}Y^{t,x}_{r})dA_r^{t,x}
\int^{s}_{0}g(r,X_{r}^{t,x},^{n}Y^{t,x}_{r})dB_{r}-\int^{s}_{0}
{}^{n}Z^{t,x}_{r}\downarrow dW_{r}. \label{c4}
\end{eqnarray*}
It is know from Boufoussi et al \cite{Bal} that
\begin{eqnarray*}
u_{n}(t,x)={}^{n}Y^{t,x}_{t},\,\,\,\ (t,x)\in
[0,T]\times\overline{\Theta},
\end{eqnarray*}
is the stochastic viscosity solution of the parabolic SPDE:
\begin{align}
\left\{
\begin{array}{l}
\frac{\displaystyle\partial u_{n}(t,x) }{\displaystyle\partial t}+[Lu_{n}(t,x)+f_{n}(t,x,u_{n}(t,x),\sigma^{*}D_{x}u_{n}(t,x))]+g(t,x,u_{n}(t,x))\lozenge B_{t}=0,\\\\
(t,x)\in [0,T]\times\Theta,\\\\
u_{n}(0,x)=l(x),\,\,\,\ x\in \overline{\Theta},\\\\
\frac{\displaystyle \partial u_{n}}{\displaystyle \partial
n}(t,x)+\phi(t,x,u_{n}(t,x))=0, \quad (t,x) \in [0,T]\times\partial
\Theta.
\end{array}\right.\label{SPDE}
\end{align}
where $f_{n}(t,x,y,z)=f(t,x,y,z)+n(y-h(t,x))^{-}$.

However, from the results of the previous section, for each
$(t,x)\in[0,T]\times\R^{d}$,
\begin{eqnarray*}
u_{n}(t,x)\uparrow u(t,x) \,\,\ a.s.\,\,\,\,\,\ \mbox{as}\,\
n\rightarrow \infty.
\end{eqnarray*}

Since $u_{n}$ and $u$ are continuous, it follows from Dini's theorem
that the above convergence is uniform on any compacts.

We now show
that $u$ is a stochastic viscosity subsolution of obstacle problem
of $\mathcal{OP}^{(f,\phi,g,h,l)}$. Let
$(\tau,\xi)\in\mathcal{M}^{B}_{0,T}\times L^{0}({\mathcal{F}}^{B}_{\tau};\overline{\Theta})$ satisfying $u(\tau,\xi)>h(\tau,\xi), \ \P$-a.s. and $\varphi\in
\mathcal{C}^{1,2}\left(\mathcal{F}_{\tau}^B,\ [0,T]\times
\overline{\Theta}\right)$ such that for $\P$-almost all $\omega\in\{0<\tau<T\}$, we have
\begin{eqnarray}
u(\omega,t,x)-\Psi(\omega,t,x)< 0=u(\omega,\tau(\omega),\xi(\omega))-\Psi(\omega,\tau(\omega),\xi(\omega))\label{V}
\end{eqnarray}
for all $(t,x)$ in some neighborhood
$\mathcal{V}\left(\omega,\tau\left(\omega\right),\xi\left(\omega\right)\right)$
of $\left(\tau\left(\omega\right),\xi\left(\omega\right)\right)$.

According the classical Lemma 6.1 in \cite{CL}, there exists  sequence of random variables $(\tau_{k},\xi_{k})_{k\geq 0}$ such that $(\tau_{k},\xi_{k})\rightarrow(\tau,\xi)$, $\P$-a.s., and $\varphi_k\in\mathcal{C}^{1,2}\left(\mathcal{F}_{\tau_k}^B,\ [0,T]\times\overline{\Theta}\right)$ satisfying $\varphi_k\rightarrow \varphi,\ \P$-a.s.,  such that
\begin{eqnarray*}
u_{n_k}(\omega,t,x)-\Psi_k(\omega,t,x)<
0=u_{n_k}(\omega,\tau_{k}(\omega),\xi_{k}(\omega))-\Psi_k(\omega,\tau_{k}(\omega),\xi_{k}(\omega))\label{V1}
\end{eqnarray*}
for all $(t,x)$ in some neighborhood
$\mathcal{V}\left(\omega,\tau_{k}\left(\omega\right),\xi_{k}\left(\omega\right)\right)\subset
\mathcal{V}\left(\omega,\tau\left(\omega\right),\xi\left(\omega\right)\right)$
for $k$ large enough.

On other hand, for $k$ enough large, let us define
\begin{eqnarray*}
\bar{\tau}_k=\inf \{t, \; u_{n_k}(t,x)-\Psi_k(t,x)=0\},\, \; x\in \overline{\Theta}.
\end{eqnarray*}
It easily seen that $(\tilde{\tau}_k)_k=(\bar{\tau}_k)_k\cap
(\tau_{k})_k$ is a sequence of stopping time satisfied
$\tilde{\tau}_k \rightarrow \tau$. Moreover, denoting by
$(\tilde{\xi}_{k})_{k}$ the subsequence of $(\xi_{k})_{k}$
associated to $(\tilde{\tau}_{k})_{k}$, it follows that
$(\tilde{\tau}_{k},\tilde{\xi}_{k})\in{\mathcal{M}}^{B}_{0,T}\times
L^{0}({\mathcal{F}}^{B}_{\tau_k};\overline{\Theta})$ and
\begin{eqnarray}
u_{n_k}(\omega,t,x)-\Psi_k(\omega,t,x)<
0=u_{n_k}(\omega,\tilde{\tau}_{k}(\omega),\tilde{\xi}_{k}(\omega))-\Psi_k(\omega,\tilde{\tau}_{k}(\omega),\tilde{\xi}_{k}(\omega))\label{V1}
\end{eqnarray}
for all $(t,x)$ in some neighborhood
$\mathcal{V}(\omega,\tilde{\tau}_{k}\left(\omega\right),\tilde{\xi}_{k}\left(\omega\right))\subset
\mathcal{V}\left(\omega,\tau\left(\omega\right),\xi\left(\omega\right)\right)$
for $k$ large enough.

Thus, since $u_n$ is a viscosity solution of SPDE $(\ref{SPDE})$ and according to $(\ref{V1})$, we
get:
\begin{itemize}
\item[(a)] On the event $\{0<\tilde{\tau}_{k}<T\}\cap\{\tilde{\xi}_{k}\in\Theta\}$ the inequality
\begin{equation*}
\mathrm{A}_{f_{n_k},g}\left(\Psi_k\left(\tilde{\tau}_{k},\tilde{\xi}_{k}\right)\right)-
D_y\Psi_k \left(\tilde{\tau}_{k},\tilde{\xi}_{k}\right)D_t
\varphi_k\left(\tilde{\tau}_{k},\tilde{\xi}_{k}\right)\leq 0
\end{equation*}
holds, $\mathbb{P}$-a.s.
\item[(b)] On the event$\{0<\tilde{\tau}_{k}<T\}\cap\{\tilde{\xi}_{k}\in\partial \Theta\}$ the inequality
\begin{eqnarray*}
\min\left[
\mathrm{A}_{f_{n_k},g}\left(\Psi_k\left(\tilde{\tau}_{k},\tilde{\xi}_{k}\right)\right)-
D_y\Psi_k \left(\tilde{\tau}_{k},\tilde{\xi}_{k}\right)D_t
\varphi_k\left(\tilde{\tau}_{k},\bar{\xi}_{k}\right),\right.\\
\left.-\frac{\partial\Psi_k}{\partial
n}(\tilde{\tau}_{k},\tilde{\xi}_{k})-\phi(\tilde{\tau}_{k},\tilde{\xi}_{k},\Psi_k(\tilde{\tau}_{k},\tilde{\xi}_{k}))\right]\leq
0
\end{eqnarray*}
holds, $\mathbb{P}$-a.s.
\end{itemize}

From the assumption that $u(\tau,\xi)>h(\tau,\xi)$ and the uniform convergence of $u_n$, it follows that for $k$ large enough $u_{n_k}(\tilde{\tau}_k,\tilde{\xi}_k)>h(\tilde{\tau}_k,\tilde{\xi}_k)$.

Therefore, taking the
limit as $k\rightarrow\infty$ in the above inequality yields:
\begin{itemize}
\item[(a)] On the event $\{0<\tau<T\}\cap\{\xi\in\Theta\}$ the inequality
\begin{equation*}
\mathrm{A}_{f,g}\left(\Psi\left(\tau,\xi\right)\right)- D_y\Psi
\left(\tau,\xi\right)D_t \varphi\left(\tau,\xi\right)\leq 0
\end{equation*}
holds, $\mathbb{P}$-a.s.
\item[(b)] On the event$\{0<\tau<T\}\cap\{\xi\in\partial \Theta\}$ the inequality
\begin{eqnarray*}
\min\left[ \mathrm{A}_{f,g}\left(\Psi\left(\tau,\xi\right)\right)-
D_y\Psi \left(\tau,\xi\right)D_t
\varphi\left(\tau,\xi\right),\right.\\
\left.-\frac{\partial\Psi}{\partial
n}(\tau,\xi)-\phi(\tau,\xi,\Psi(\tau,\xi))\right]\leq 0
\end{eqnarray*}
holds, $\mathbb{P}$-a.s.
\end{itemize}
This proved that $u$ is a stochastic viscosity subsolution of
$\mathcal{OP}^{f,\phi,g,h,l}$.

By the same argument as above one can
show that $u$ given by $(\ref{c3})$ is also a stochastic viscosity
supersolution of $\mathcal{OP}^{f,\phi,g,h,l}$.

We conclude that $u$ is a stochastic viscosity of $\mathcal{OP}^{f,\phi,g,h,l}$, which end the proof.
\end{proof}

\noindent{\bf Acknowledgments}\newline The first author would like
to express his deep gratitude to B. Boufoussi, Y. Ouknine and UCAM
Mathematics Department for their friendly hospitality. An anonymous
referee is acknowledged for his comments, remarks and for a
significant improvement to the overall presentation of this paper.

\label{lastpage-01}
\end{document}